\documentclass[11pt, reqno, a4paper]{amsart}
\usepackage[margin=1.2in]{geometry}
\numberwithin{equation}{section}
\usepackage{amssymb,amsfonts,amsthm}
\usepackage[utf8]{inputenc}
\usepackage{listings}
\usepackage{bm}

\usepackage{color}
\usepackage[bookmarks]{hyperref}

\addtolength{\textheight}{3mm} \addtolength{\textwidth}{11mm}
\addtolength{\oddsidemargin}{-8mm}
\addtolength{\evensidemargin}{-8mm} \addtolength{\topmargin}{-5mm}

\allowdisplaybreaks[4]

\vfuzz8pt 

\newtheoremstyle{myremark}{10pt}{10pt}{}{}{\bfseries}{.}{.5em}{}

 \newtheorem{thm}{Theorem}[section]
 \newtheorem{cor}[thm]{Corollary}
 \newtheorem{lem}[thm]{Lemma}
 \newtheorem{prop}[thm]{Proposition}
 \theoremstyle{definition}

 \newtheorem{rem}[thm]{Remark}


\allowdisplaybreaks[4]


\begin{document}

\title[$L^p$ Hardy inequalities with homogeneous weights]{$L^p$ Hardy inequalities with homogeneous weights}

\author[ S. Roy]{ Subhajit Roy$^{1,2}$}

\keywords{Hardy inequalities, Laplace-Beltrami operators, Gagliardo-Nirenberg inequalities.}

\subjclass{ 35A23, 46E35, 49J40.}

{\email{ rsubhajit.math@gmail.com, subhajit.roy@iiserpune.ac.in}}

\maketitle
\centerline{$^{1}$Department of Mathematics, Indian Institute of Technology Madras,
 }
\centerline{Chennai  600036, India}
\centerline{$^{2}$Department of Mathematics, Indian Institute of Science Education and Research, Pune
 }
\centerline{Pune 411008, India}
\begin{abstract} 
For $p\in (1,\infty)$ and $\alpha\in\mathbb{R}$, we consider measurable functions $g$ on $\mathbb{S}^{N-1}$ that satisfy the following weighted Hardy inequality:
  $$  \int_{\mathbb{R}^N}\frac{ g (x/|x|)}{|x|^{p+\alpha}}|u(x)|^p dx \leq C\int_{\mathbb{R}^N}\frac{|\nabla u(x)|^p}{|x|^\alpha} dx, \quad\forall\,u\in \mathcal{C}_c^\infty(\mathbb{R}^N),$$
for some constant $C>0$. Depending on $N$, $p$, and $\alpha$, we identify suitable function spaces for $g$ so that the above inequality holds. The constant obtained is sharp, in the sense that it is sharp when $g \equiv 1$. Furthermore, we establish the sharp fractional Hardy inequality with homogeneous weights.

\end{abstract}

\section{Introduction}
For $p\in(1,N)$, the classical {\it{Hardy inequality}} states that 
\begin{equation*}
    \int_{\mathbb{R}^N}\frac{|u(x)|^p}{|x|^p}dx\leq \left(\frac{p}{N-p}\right)^p\int_{\mathbb{R}^N}|\nabla u(x)|^pdx,\quad\forall\,u\in \mathcal{C}_c^\infty(\mathbb{R}^N).
\end{equation*} The constant \(\left(\frac{p}{N-p}\right)^p\) is sharp but never attained (see \cite[Lemma 2.1]{peral1998}). Hardy's inequality plays a fundamental role in mathematical physics, harmonic analysis, spectral theory, the analysis of linear and nonlinear PDEs, and stochastic analysis. Over the years, this inequality has been generalised in several directions, most notably by replacing the weight \(\frac{1}{|x|^p}\) with more general Hardy weights. The admissible function spaces for such weights are discussed in \cite{Allegretto1992, Anoop2021, bessel, Visciglia2005} and the references therein. Characterisations of Hardy weights are given in \cite{UjjalDas2023, pinc, maazya1985}, while their fractional and discrete counterparts are studied in \cite{dyda-ch} and \cite{Pinchover-dis}, respectively. 
Hardy-type inequalities with respect to the Gaussian measure are address in \cite{Barbara2007}.
A comprehensive review of Hardy inequalities is presented in \cite{Hardybook2015}.  For further readings on Hardy inequalities, we refer to the monograph \cite{mazya2011}. Another extension of Hardy’s inequality is the \textit{Caffarelli-Kohn-Nirenberg inequality}, established by Caffarelli, Kohn, and Nirenberg in \cite{caffarelli1984}. We state a particular case of their result. Let $N\ge 1,\,p> 1,$ and $\alpha\in \mathbb{R}.$ If $N>p+\alpha,$ then the following weighted Hardy inequality holds:
\begin{equation}\label{hardy-phi}
    \int_{\mathbb{R}^N}\frac{|u(x)|^p}{|x|^{p+\alpha}} dx \leq C\int_{\mathbb{R}^N}\frac{|\nabla u(x)|^p}{|x|^\alpha} dx, \quad\forall\,u\in \mathcal{C}_c^\infty(\mathbb{R}^N),
\end{equation}
 for some constant $C>0.$ The optimal constant in \eqref{hardy-phi} is $C=\left(\frac{p}{N-p-\alpha}\right)^p$ (see \cite[Theorem 1.1]{catrina2001} for $p=2$ and \cite[Theorem 2.1]{cylin} for any $p>1$).
\smallskip

Hardy inequalities have also been studied in domains with boundary singularities. For instance, Davies \cite[Page 425]{davies1995} considered a conical region in $\mathbb{R}^N$ of the form 
$$\Omega=\{(r,\vartheta): 0<r<\delta,\,\vartheta\in U\},$$ where $\delta>0 $ and $U$ is an open set in $\mathbb{S}^{N-1}$ with smooth boundary. For such domains, he established a Hardy inequality involving the distance to the conical part of the boundary, defined as $\partial_1 \Omega:=\{(r,\vartheta): 0\le r<\delta, \vartheta\in \partial U\}.$ More precisely, there exists a constant $C>0,$ such that
\begin{equation*}
    \int_\Omega \frac{|u(x)|^2}{d^2(x)}\le C
    \int_\Omega|\nabla u(x)|^2 dx, \quad\forall\,u\in \mathcal{C}_c^\infty(\Omega),
\end{equation*}
where $d(x)$ denotes the distance from $x
\in \Omega$ to $\partial_1 \Omega$.
A key geometric observation is that for any point $x=(r,\vartheta)\in \Omega,$ the distance admits the factorisation $d(x)=rT(\vartheta)$ (see \cite[Page 423]{davies1995}), where   $T$ is a continuous function on $U$ satisfying $0<T(\vartheta)\le 1$ for all $\vartheta\in U$ and $\lim_{\vartheta\to \partial U}T(\vartheta)=0.$ Setting  $g=1/T^2$, the above inequality can be rewritten as
 \begin{equation*}
    \int_\Omega \frac{g(x/|x|)}{|x|^2}|u(x)|^2\le C
    \int_\Omega|\nabla u(x)|^2 dx, \quad\forall\,u\in \mathcal{C}_c^\infty(\Omega).
\end{equation*} Thus, the angular weight $g$ arises naturally from the geometry of the boundary.

\smallskip

Motivated by these developments, in this article we consider a further generalisation. 
 Instead of the radial weight $1/|x|^{p+\alpha}$ on the left-hand side of \eqref{hardy-phi}, we consider a more general class of homogeneous functions of degree $-(p+\alpha)$, which may also have singularities along the rays starting at the origin. 
More precisely, we look for a class of measurable functions $g$ on $\mathbb{S}^{N-1}$ so that the following inequality holds
\begin{equation}\label{hom-we}
    \int_{\mathbb{R}^N}\frac{ g (x/|x|)}{|x|^{p+\alpha}}|u(x)|^p dx \leq C\int_{\mathbb{R}^N}\frac{|\nabla u(x)|^p}{|x|^\alpha} dx, \quad\forall\,u\in \mathcal{C}_c^\infty(\mathbb{R}^N),
\end{equation}
for some constant $C>0.$ 

\smallskip

There is significant interest in studying the above inequality in certain cases. For $p=2$ with $\alpha=0$, Hoffmann-Ostenhof and Laptev \cite{ari2015} studied \eqref{hom-we} with the sharp constant, using a sharp estimate for the first negative eigenvalue of the Schr\"odinger operator in $L^2({\mathbb{S}^{N-1}})$.

\smallskip

For a measurable function $g$ defined on $\mathbb{S}^{N-1}$, we denote by $\|\cdot\|_{L^q(\mathbb{S}^{N-1})}$ the quantity
$$\| g \|_{L^q(\mathbb{S}^{N-1})}=\left(\int_{\mathbb{S}^{N-1}}| g (\vartheta)|^qd\vartheta\right)^{1/q},\quad q\in [1,\infty),$$ where $d\vartheta$ is the measure induced by the Lebesgue measure on $\mathbb{S}^{N-1} \subset \mathbb{R}^N$. Hoffmann-Ostenhof and Laptev proved the following theorem:
\begin{thm}\cite[Theorem 1.1]{ari2015}\label{sharp-thm}
    Let $N\ge 3$ and $0\le g\in L^q(\mathbb{S}^{N-1}),$ where $q={\frac{(N-2)^2}{2(N-1)}+1}.$ Then the following inequality holds
    \begin{equation}\label{sharp-Hardy}
    \int_{\mathbb{R}^N}\frac{ g (x/|x|)}{|x|^{2}}|u(x)|^2 dx \leq C \int_{\mathbb{R}^N}|\nabla u(x)|^2 dx, \quad\forall\,u\in \mathcal{C}_c^\infty(\mathbb{R}^N),    
    \end{equation}
     where 
    $$C = \frac{4\,\|g \|_{L^q(\mathbb{S}^{N-1})}}{(N-2)^2\,|\mathbb{S}^{N-1}|^{1/q} }.$$
Moreover, the inequality \eqref{sharp-Hardy} is sharp and reduces to the classical sharp form when $g\equiv 1.$
\end{thm}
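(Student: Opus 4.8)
The plan is to reduce the $N$-dimensional inequality to a one-parameter family of inequalities on the unit sphere via the logarithmic (Emden--Fowler, or cylindrical) change of variables, and then to combine H\"older's inequality with a \emph{sharp subcritical Sobolev inequality} on $\mathbb{S}^{N-1}$. Writing $x=r\vartheta$ with $r=|x|$, $\vartheta\in\mathbb{S}^{N-1}$, and substituting $u(x)=|x|^{-(N-2)/2}\,v(\log|x|,\vartheta)$ (first for $u\in\mathcal{C}_c^\infty(\mathbb{R}^N\setminus\{0\})$ and then by a density argument, the left-hand side of \eqref{sharp-Hardy} being locally integrable near the origin since $N\ge3$ and $g\in L^1(\mathbb{S}^{N-1})$), a direct computation yields
\begin{align*}
\int_{\mathbb{R}^N}|\nabla u|^2\,dx &= \int_{\mathbb{R}}\!\int_{\mathbb{S}^{N-1}}\Big(|\partial_t v|^2+|\nabla_{\mathbb{S}^{N-1}}v|^2+\tfrac{(N-2)^2}{4}|v|^2\Big)\,dt\,d\vartheta,\\
\int_{\mathbb{R}^N}\frac{g(x/|x|)}{|x|^2}|u|^2\,dx &= \int_{\mathbb{R}}\!\int_{\mathbb{S}^{N-1}}g(\vartheta)\,|v(t,\vartheta)|^2\,dt\,d\vartheta,
\end{align*}
the cross term produced by $\partial_t$ in the first identity vanishing because $v(t,\cdot)\to0$ as $t\to\pm\infty$. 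Since in the resulting equivalent inequality the radial derivative $\partial_t$ only helps, it suffices to bound, uniformly for a.e.\ fixed $t$, the angular integral $\int_{\mathbb{S}^{N-1}}g(\vartheta)|v(t,\vartheta)|^2\,d\vartheta$ by a constant times $\int_{\mathbb{S}^{N-1}}\big(|\nabla_{\mathbb{S}^{N-1}}v|^2+\tfrac{(N-2)^2}{4}|v|^2\big)\,d\vartheta$.

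For this angular estimate I would first apply H\"older's inequality with exponents $q$ and $q'=q/(q-1)$,
\begin{equation*}
\int_{\mathbb{S}^{N-1}}g\,|v(t,\cdot)|^2\,d\vartheta \le \|g\|_{L^q(\mathbb{S}^{N-1})}\,\|v(t,\cdot)\|_{L^{2q'}(\mathbb{S}^{N-1})}^2 ,
\end{equation*}
and then control $\|v(t,\cdot)\|_{L^{2q'}}^2$ by the $H^1$-norm on the sphere. The key observation is that the exponent $q=\frac{(N-2)^2}{2(N-1)}+1$ is chosen exactly so that $2q'=2+\frac{4(N-1)}{(N-2)^2}$, which is \emph{strictly subcritical}: for $N\ge4$ one has $2q'<\frac{2(N-1)}{N-3}$ (the inequality reduces to $(N-1)(N-3)<(N-2)^2$, i.e.\ $3<4$), while $2q'<\infty$ trivially for $N=3$. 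On this range I would invoke the sharp Sobolev inequality on $\mathbb{S}^{N-1}$ (due to Bidaut-V\'eron--V\'eron and Beckner): for $2<p\le\frac{2(N-1)}{N-3}$ (any finite $p$ if $N=3$) and every $w\in H^1(\mathbb{S}^{N-1})$,
\begin{equation*}
\|w\|_{L^{p}(\mathbb{S}^{N-1})}^2 \le |\mathbb{S}^{N-1}|^{\frac{2}{p}-1}\Big(\tfrac{p-2}{N-1}\,\|\nabla_{\mathbb{S}^{N-1}}w\|_{L^2}^2+\|w\|_{L^2}^2\Big),
\end{equation*}
with equality for constants. Taking $p=2q'$ makes $\tfrac{p-2}{N-1}=\tfrac{4}{(N-2)^2}$ and $|\mathbb{S}^{N-1}|^{\frac2p-1}=|\mathbb{S}^{N-1}|^{-1/q}$, so that, combining with H\"older, integrating over $t$, and using $\tfrac{4}{(N-2)^2}|\nabla_{\mathbb{S}^{N-1}}v|^2+|v|^2\le\tfrac{4}{(N-2)^2}\big(|\partial_t v|^2+|\nabla_{\mathbb{S}^{N-1}}v|^2+\tfrac{(N-2)^2}{4}|v|^2\big)$, one recovers exactly \eqref{sharp-Hardy} with $C=\frac{4\,\|g\|_{L^q(\mathbb{S}^{N-1})}}{(N-2)^2\,|\mathbb{S}^{N-1}|^{1/q}}$. (Equivalently, the required fibrewise bound states that $g\le C\big(-\Delta_{\mathbb{S}^{N-1}}+\tfrac{(N-2)^2}{4}\big)$ in the quadratic-form sense, i.e.\ a sharp lower bound for the bottom of the spectrum of the Schr\"odinger operator $-\Delta_{\mathbb{S}^{N-1}}-g/C$ on $L^2(\mathbb{S}^{N-1})$, which is the route taken by Hoffmann-Ostenhof and Laptev.)

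For the sharpness assertion, note that when $g\equiv1$ we have $\|g\|_{L^q(\mathbb{S}^{N-1})}=|\mathbb{S}^{N-1}|^{1/q}$, hence $C=\tfrac{4}{(N-2)^2}=\big(\tfrac{2}{N-2}\big)^2$, which is the optimal constant in \eqref{hardy-phi} for $p=2$, $\alpha=0$; its optimality (and non-attainment) is classical --- for instance, testing with radial functions $u=r^{-(N-2)/2}\phi(\log r)$ for a slowly varying cutoff $\phi$ supported away from the origin drives the Rayleigh quotient to $4/(N-2)^2$, cf.\ \cite[Lemma 2.1]{peral1998}. I expect the genuine difficulty here to be conceptual rather than technical: one must recognize that $q$ is precisely the value for which the sharp subcritical Sobolev constant on $\mathbb{S}^{N-1}$ at exponent $2q'$ equals the one-dimensional Hardy constant $4/(N-2)^2$, and verify the subcriticality $2q'<\frac{2(N-1)}{N-3}$ so that the Bidaut-V\'eron--V\'eron--Beckner inequality applies with its optimal constant. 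The remaining ingredients --- the change of variables, the vanishing of the boundary terms, and the bookkeeping of constants --- are routine, the only mildly delicate point being the density step that permits $u$ not to vanish at the origin.
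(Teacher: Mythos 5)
Your proof is correct and is essentially the route the paper itself takes: the paper records this statement as a citation of Hoffmann-Ostenhof--Laptev and reproves it in weighted generality (Theorem \ref{sharp-wthm} with $\alpha=0$) via H\"older's inequality on the sphere combined with the Dolbeault--Esteban--Laptev Gagliardo--Nirenberg lemma (Lemma \ref{G-N-ineq}), which is exactly the sharp subcritical Bidaut-V\'eron--V\'eron/Beckner inequality you invoke, with the same exponent arithmetic $2q'=2+\tfrac{4(N-1)}{(N-2)^2}<\tfrac{2(N-1)}{N-3}$. The only cosmetic difference is that you treat the radial direction by the Emden--Fowler substitution (so the mass term $\tfrac{(N-2)^2}{4}|v|^2$ appears automatically), whereas the paper applies the one-dimensional weighted Hardy inequality (Lemma \ref{weighted-Hardy}) directly in $r$; the two devices are equivalent here.
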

Inequality \ref{sharp-Hardy} was also studied in \cite{Laptev2020, Laptevlatest} under suitable sufficient conditions on the weight function $g$. 
We are interested in studying a weighted extension of \eqref{sharp-Hardy} in a more general setting.
Depending on the values of $N$ and $p$, we identify a suitable function space for $g$ such that \eqref{hom-we} holds. 
We now state our first result:
\begin{thm}\label{thm-p2}
Let $N\ge 2,\,p\in (1,\infty),$ and $\alpha\in \mathbb{R}$ be such that $N> p+\alpha$. 
Assume that $g \in L^q(\mathbb{S}^{N-1})$, where
\[
q =
\begin{cases}
\dfrac{N-1}{p}, & \text{if } p < N-1,\\
1, & \text{if } p > N-1 ,
\end{cases}
\]
and  $q>1$ when $ p = N-1.$
Then there exists a constant $C>0$, independent of $u,$ such that
\begin{equation}\label{eq-p02}
    \int_{\mathbb{R}^N}\frac{ g (x/|x|)}{|x|^{p+\alpha}}|u(x)|^p dx \leq C\| g \|_{L^{q}(\mathbb{S}^{N-1})} \int_{\mathbb{R}^N}\frac{|\nabla u(x)|^p}{|x|^\alpha} dx, \quad\forall\,u\in \mathcal{C}_c^\infty(\mathbb{R}^N).
\end{equation}
\end{thm}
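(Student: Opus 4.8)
The plan is to pass to polar coordinates, extract $\|g\|_{L^{q}(\mathbb{S}^{N-1})}$ by H\"older's inequality on the sphere, and then bound the remaining radial integral by combining a Sobolev embedding on $\mathbb{S}^{N-1}$ with the classical weighted Hardy inequality \eqref{hardy-phi}. Writing $x=r\vartheta$ with $r=|x|$ and $\vartheta\in\mathbb{S}^{N-1}$, so that $dx=r^{N-1}\,dr\,d\vartheta$, the left-hand side of \eqref{eq-p02} equals $\int_{0}^{\infty}r^{N-1-p-\alpha}\bigl(\int_{\mathbb{S}^{N-1}}g(\vartheta)\,|u(r\vartheta)|^{p}\,d\vartheta\bigr)\,dr$. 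Letting $q'$ be the conjugate exponent of $q$, H\"older's inequality on $\mathbb{S}^{N-1}$ gives $\int_{\mathbb{S}^{N-1}}g(\vartheta)|u(r\vartheta)|^{p}\,d\vartheta\le\|g\|_{L^{q}(\mathbb{S}^{N-1})}\,\|u(r\,\cdot\,)\|_{L^{pq'}(\mathbb{S}^{N-1})}^{p}$, so it suffices to prove
\begin{equation*}
\int_{0}^{\infty}r^{N-1-p-\alpha}\,\|u(r\,\cdot\,)\|_{L^{pq'}(\mathbb{S}^{N-1})}^{p}\,dr\ \le\ C\int_{\mathbb{R}^{N}}\frac{|\nabla u(x)|^{p}}{|x|^{\alpha}}\,dx .
\end{equation*}
The decisive point is that the choice of $q$ makes $pq'$ equal to the critical Sobolev exponent of $W^{1,p}(\mathbb{S}^{N-1})$: if $p<N-1$ then $pq'=\frac{(N-1)p}{N-1-p}$, if $p>N-1$ then $q'=\infty$, and if $p=N-1$ then $pq'$ is an arbitrary finite exponent. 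This is exactly the trichotomy in the statement, and it also explains why $q>1$ (rather than $q=1$) must be imposed when $p=N-1$, since $W^{1,N-1}(\mathbb{S}^{N-1})$ does not embed into $L^{\infty}(\mathbb{S}^{N-1})$.

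For fixed $r>0$, I would apply the Sobolev embedding $W^{1,p}(\mathbb{S}^{N-1})\hookrightarrow L^{pq'}(\mathbb{S}^{N-1})$ — which in the regime $p>N-1$ is the Morrey embedding into $L^{\infty}(\mathbb{S}^{N-1})$, and in the regime $p=N-1$ the embedding into every $L^{m}(\mathbb{S}^{N-1})$ with $m<\infty$ — to the function $\vartheta\mapsto u(r\vartheta)$. Using the polar decomposition $|\nabla u(r\vartheta)|^{2}=|\partial_{r}u(r\vartheta)|^{2}+r^{-2}\,|\nabla_{\mathbb{S}^{N-1}}[u(r\,\cdot\,)](\vartheta)|^{2}$, which yields $\|\nabla_{\mathbb{S}^{N-1}}[u(r\,\cdot\,)]\|_{L^{p}(\mathbb{S}^{N-1})}\le r\,\|\nabla u(r\,\cdot\,)\|_{L^{p}(\mathbb{S}^{N-1})}$, one obtains for each $r>0$
\begin{equation*}
\|u(r\,\cdot\,)\|_{L^{pq'}(\mathbb{S}^{N-1})}^{p}\ \le\ C\left(r^{p}\,\|\nabla u(r\,\cdot\,)\|_{L^{p}(\mathbb{S}^{N-1})}^{p}+\|u(r\,\cdot\,)\|_{L^{p}(\mathbb{S}^{N-1})}^{p}\right),
\end{equation*}
with $C=C(N,p)$. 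Multiplying by $r^{N-1-p-\alpha}$, integrating over $r\in(0,\infty)$, and returning to Cartesian coordinates gives
\begin{equation*}
\int_{0}^{\infty}r^{N-1-p-\alpha}\|u(r\,\cdot\,)\|_{L^{pq'}(\mathbb{S}^{N-1})}^{p}\,dr\ \le\ C\left(\int_{\mathbb{R}^{N}}\frac{|\nabla u(x)|^{p}}{|x|^{\alpha}}\,dx+\int_{\mathbb{R}^{N}}\frac{|u(x)|^{p}}{|x|^{p+\alpha}}\,dx\right).
\end{equation*}

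Finally, since $N>p+\alpha$, the second integral on the right is controlled by the first by the weighted Hardy (Caffarelli--Kohn--Nirenberg) inequality \eqref{hardy-phi}, with a constant depending only on $N,p,\alpha$; alternatively this can be proved directly by applying the one-dimensional Hardy inequality along each ray to the bound $|u(r\vartheta)|\le\int_{r}^{\infty}|\partial_{s}u(s\vartheta)|\,ds$ and integrating in $\vartheta\in\mathbb{S}^{N-1}$. Chaining the displayed estimates with the H\"older step produces \eqref{eq-p02} with $C=C(N,p,\alpha)\,\|g\|_{L^{q}(\mathbb{S}^{N-1})}$; note that $N\ge2$ guarantees that $\mathbb{S}^{N-1}$ is a manifold of positive dimension on which the relevant Sobolev and Morrey embeddings are available. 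I expect the only genuinely delicate points to be: (i) recognizing that the admissible exponent $q$ is dictated by the critical Sobolev exponent on the $(N-1)$-sphere, which is what forces the case distinction and the strict inequality $q>1$ at $p=N-1$; and (ii) the careful bookkeeping of the factor $r$ that appears when comparing the intrinsic gradient on $\mathbb{S}^{N-1}$ with the full Euclidean gradient, since this is precisely what makes the powers of $r$ match after the radial integration.
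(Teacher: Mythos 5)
Your proposal is correct and follows essentially the same route as the paper: polar coordinates, H\"older's inequality on $\mathbb{S}^{N-1}$ with $q$ chosen so that $pq'$ is the critical Sobolev exponent of $W^{1,p}(\mathbb{S}^{N-1})$ (with the same trichotomy at $p=N-1$ and the Morrey embedding for $p>N-1$), and a one-dimensional weighted Hardy inequality along rays to absorb the zero-order term $\int_{\mathbb{R}^N}|u|^p|x|^{-p-\alpha}\,dx$ into $\int_{\mathbb{R}^N}|\nabla u|^p|x|^{-\alpha}\,dx$. The only cosmetic difference is that the paper packages the H\"older-plus-Sobolev step as a separate lemma on the sphere and cites the Opic--Kufner one-dimensional Hardy inequality where you cite the Caffarelli--Kohn--Nirenberg inequality \eqref{hardy-phi}, which is equivalent here.
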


\smallskip

The proof of Theorem \ref{thm-p2} is based on Sobolev inequalities on the unit sphere (see \cite{Aubin1976, Aubin2}, Theorem \ref{thm-sphere}), together with one-dimensional weighted Hardy inequalities.
\begin{rem}
    Notice that \eqref{hardy-phi} follows from \eqref{eq-p02} by talking $g\equiv 1\in L^q(\mathbb{S}^{N-1}).$ Moreover, Theorem \ref{thm-p2} is stronger than Theorem \ref{sharp-thm} in the sense that it admits a larger function space $L^q(\mathbb{S}^{N-1})$ than $L^r(\mathbb{S}^{N-1}),$ where $r={\frac{(N-2)^2}{2(N-1)}+1}$.
\end{rem}
\begin{rem}
  Let $p\in (1,N)$. In \cite{Anoop2021}, it was shown that for $h\in L^{\frac{N}{p},\infty}(\mathbb{R}^N)$,  
\begin{equation}\label{weighted-hardy-in}
    \int_{\mathbb{R}^N}|h(x)||u(x)|^pdx\leq C\|h\|_{L^{\frac{N}{p},\infty}(\mathbb{R}^N)} \int_{\mathbb{R}^N}|\nabla u(x)|^pdx,\quad\forall\,u\in \mathcal{C}_c^1(\mathbb{R}^N),
\end{equation}
where  $C=C(N,p)>0$ and $L^{\frac{N}{p},\infty}$ denotes the Lorentz space with quasi-norm $  \|u\|_{q,\infty}:=\sup_{t>0}|\{x\in \mathbb{R}^N:|u(x)|>t\}|^\frac{1}{q}t.$
For $h(x)=\frac{ g (x/|x|)}{|x|^p}$ with $ g \in L^\frac{N}{p}(\mathbb{S}^{N-1})$,
\eqref{weighted-hardy-in}  becomes 
\begin{equation*}
    \int_{\mathbb{R}^N}\frac{ g (x/|x|)}{|x|^p}|u(x)|^pdx\leq C_1\int_{\mathbb{R}^N}|\nabla u(x)|^pdx,\quad\forall\,u\in \mathcal{C}_c^1(\mathbb{R}^N),
\end{equation*}
where $C_1=C(1/N)^\frac{p}{N} \| g\|_{L^\frac{N}{p}(\mathbb{S}^{N-1})}.$ While this inequality holds for all $g\in L^\frac{N}{p}(\mathbb{S}^{N-1}),$  Theorem \ref{thm-p2} extends it to the larger class $L^q(\mathbb{S}^{N-1})$, thereby allowing a broader range of admissible weights on the unit sphere.
\end{rem}
Now, we study the inequality \eqref{eq-p02} with the \textit{sharp constant} for $p=2$. The constant is sharp in the sense that it is sharp when the weight function $g$ is constant.

\begin{thm}\label{sharp-wthm}  Let $N\ge 2$ and $\alpha\in \mathbb{R}$ be such that $N>2+\alpha$, and $g\in L^q(\mathbb{S}^{N-1})$ with $q>1$. 
\begin{enumerate}
    \item If $2N\alpha<(1+\alpha)^2$ and $q=\frac{(N-\alpha-2)^2}{2(N-1)}+1,$ then 
    \begin{equation}\label{shrp-whardy}
         \int_{\mathbb{R}^{N}} \frac{g(x/|x|)}{|x|^{2+\alpha}} |u|^2dx\le C\int_{\mathbb{R}^N}\frac{|\nabla u|^2}{|x|^\alpha}dx, \quad\forall\,u\in \mathcal{C}_c^\infty(\mathbb{R}^N),
    \end{equation}
    where 
    \begin{equation}\label{constant-best}
        C= \frac{4 \,\|g \|_{L^q(\mathbb{S}^{N-1})}}{(N-\alpha-2)^2\,|\mathbb{S}^{N-1}|^{1/q} }.
    \end{equation}
\item If $2N\alpha<(1+\alpha)^2,$  $N>3$, and $q=\frac{N-1}{2}$, then 
\begin{equation}\label{sharp-whardy2}
    \int_{\mathbb{R}^{N}} \frac{g(x/|x|)}{|x|^{2+\alpha}} |u|^2dx\le   \gamma_0 C\int_{\mathbb{R}^N}\frac{|\nabla u|^2}{|x|^\alpha}dx-(\gamma_0-1)\frac{\|g \|_{L^q(\mathbb{S}^{N-1})}}{|\mathbb{S}^{N-1}|^{1/q} }\int_{\mathbb{R}^N}\frac{|u|^2}{|x|^{2+\alpha}}dx, \;\;\forall\,u\in \mathcal{C}_c^\infty(\mathbb{R}^N),
\end{equation} 
   where $C$ is as in \eqref{constant-best} and $$\gamma_0=\frac{(N-\alpha-2)^2}{(N-1)(N-3)}>1.$$
  \item  If $2N\alpha\ge (1+\alpha)^2,$ $N>3,$  and $q=\frac{N-1}{2},$ then 
    $$ \int_{\mathbb{R}^{N}} \frac{g(x/|x|)}{|x|^{2+\alpha}} |u|^2dx\le C \int_{\mathbb{R}^N}\frac{|\nabla u|^2}{|x|^\alpha}dx, \quad\forall\,u\in \mathcal{C}_c^\infty(\mathbb{R}^N),$$
     where $C$ is as in \eqref{constant-best}.
    \end{enumerate}
     Moreover, the above three inequalities are sharp as it is sharp when $ g\equiv 1.$
\end{thm}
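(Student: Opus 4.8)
The plan is to reduce everything to the unweighted case $\alpha=0$ by a change of variables in the radial direction, following the standard ``Emden--Fowler'' type substitution adapted to $L^2$ with the weight $|x|^{-\alpha}$. Writing $x=r\vartheta$ with $r=|x|$ and $\vartheta\in\mathbb{S}^{N-1}$, I would expand $u$ in spherical coordinates and use the identity
\begin{equation*}
\int_{\mathbb{R}^N}\frac{|\nabla u|^2}{|x|^\alpha}\,dx=\int_0^\infty\int_{\mathbb{S}^{N-1}}\Bigl(|\partial_r u|^2+\frac{1}{r^2}|\nabla_{\mathbb{S}^{N-1}}u|^2\Bigr)r^{N-1-\alpha}\,d\vartheta\,dr,
\end{equation*}
together with the analogous expression for the left-hand side. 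Substituting $r=e^{-t}$ (or $s=r^{\beta}$ for a suitable power $\beta=\beta(N,\alpha)$) converts the radial part into a problem on a cylinder $\mathbb{R}\times\mathbb{S}^{N-1}$ with a constant shift, so that after separating variables the inequality becomes: for each spherical mode the one-dimensional weighted Hardy inequality supplies a constant, and the sharp constant is governed by the bottom of the spectrum of $-\Delta_{\mathbb{S}^{N-1}}+\lambda$ perturbed by the potential built from $g$. The key input is then the sharp lower bound for the first eigenvalue of $-\Delta_{\mathbb{S}^{N-1}}-V$ with $V$ controlled by $\|g\|_{L^q}$, exactly as in \cite{ari2015}; the condition $2N\alpha<(1+\alpha)^2$ is what guarantees the effective ``shift'' stays in the range where that eigenvalue estimate is applicable with the stated exponent $q=\frac{(N-\alpha-2)^2}{2(N-1)}+1$.

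For part (1), after the substitution the problem is isomorphic to the $\alpha=0$ case of Theorem \ref{sharp-thm} but on $\mathbb{S}^{N-1}$ with the parameter $N-2$ replaced by $N-\alpha-2$; I would verify that the Sobolev exponent on the sphere transforms correctly so that $q=\frac{(N-\alpha-2)^2}{2(N-1)}+1$ is precisely the dual exponent that makes the eigenvalue estimate sharp, and that the resulting best constant is $C=\frac{4\|g\|_{L^q}}{(N-\alpha-2)^2|\mathbb{S}^{N-1}|^{1/q}}$. The sharpness when $g\equiv1$ follows because then the potential term is constant, the optimal mode is the constant mode on the sphere, and the inequality collapses to the sharp Caffarelli--Kohn--Nirenberg inequality \eqref{hardy-phi} whose constant $\left(\frac{p}{N-p-\alpha}\right)^p$ with $p=2$ is exactly $C$ evaluated at $g\equiv1$ (using $\|1\|_{L^q}=|\mathbb{S}^{N-1}|^{1/q}$).

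For parts (2) and (3), the range $q=\frac{N-1}{2}$ corresponds to the borderline Sobolev exponent $2^*=\frac{2(N-1)}{N-3}$ on $\mathbb{S}^{N-1}$ (which forces $N>3$), so instead of a spectral-gap estimate I would use the sharp Sobolev inequality on the sphere (Theorem \ref{thm-sphere}) combined with Hölder's inequality to bound $\int_{\mathbb{S}^{N-1}}g|v|^2$ by $\|g\|_{L^q}$ times the full $H^1(\mathbb{S}^{N-1})$ norm of $v$. This produces a term proportional to $\int|\nabla_{\mathbb{S}^{N-1}}v|^2$ plus a term proportional to $\int|v|^2$; transported back through the radial substitution, the gradient term contributes (after again invoking the one-dimensional Hardy inequality to absorb the radial part) the factor $\gamma_0 C$ on $\int|\nabla u|^2|x|^{-\alpha}$, while the $L^2$-on-the-sphere term becomes the subtracted $\int|u|^2|x|^{-2-\alpha}$ term with the stated coefficient. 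The dichotomy between (2) and (3) is whether the sign condition $2N\alpha\gtrless(1+\alpha)^2$ lets one absorb the subtracted term back (giving the clean inequality in (3)) or not (leaving it explicitly in (2)); I would check the arithmetic that $\gamma_0=\frac{(N-\alpha-2)^2}{(N-1)(N-3)}>1$ exactly under $N>3$ and that in case (3) the extra term has a favorable sign allowing it to be dropped.

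The main obstacle I anticipate is bookkeeping the exact constants through the radial change of variables: one must track how $\alpha$ enters the effective dimension and the effective Sobolev exponent simultaneously, and confirm that the one-dimensional weighted Hardy constant $\left(\frac{2}{N-\alpha-2}\right)^2$ multiplies cleanly against the sharp spherical eigenvalue/Sobolev constant to reproduce \eqref{constant-best} without any spurious factor. Verifying sharpness requires exhibiting a minimizing sequence that is (approximately) the product of the extremal radial profile from the one-dimensional Hardy inequality and the constant function on $\mathbb{S}^{N-1}$ when $g\equiv1$; the non-attainment is inherited from the non-attainment in \eqref{hardy-phi}.
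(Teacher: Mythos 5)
Your overall skeleton --- polar coordinates, H\"older on the sphere against $\|g\|_{L^q(\mathbb{S}^{N-1})}$, a sharp spherical functional inequality, and the one-dimensional weighted Hardy inequality with constant $4/(N-\alpha-2)^2$ in the radial variable --- matches the paper's proof, and for part (1) your route through the sharp eigenvalue estimate for $-\Delta_\vartheta - V$ on $\mathbb{S}^{N-1}$ is essentially the dual formulation of what the paper actually uses, namely the Gagliardo--Nirenberg inequality \eqref{gn} of Dolbeault--Esteban--Laptev (Lemma \ref{G-N-ineq}), applied with $\beta=\frac{(N-\alpha-2)^2}{4}$ so that $\mu(\beta)=\beta$ and the constant \eqref{constant-best} comes out exactly. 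The Emden--Fowler substitution you propose is harmless but unnecessary; the paper never leaves polar coordinates.

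The genuine gap is in parts (2) and (3). You propose to use Theorem \ref{thm-sphere} there, but that statement is a qualitative Sobolev embedding with an unspecified constant $C$, so it cannot produce the explicit sharp constant \eqref{constant-best}; what is needed is the \emph{critical} case $t=\frac{2(N-1)}{N-3}$ of Lemma \ref{G-N-ineq}, where the sharp inequality holds with $\mu(\beta)=\min\bigl\{\beta,\tfrac{(N-1)(N-3)}{4}\bigr\}$. Moreover, your description of the dichotomy is not quite the right mechanism: the split between (2) and (3) is not about whether a subtracted term has a favorable sign, but about whether the Hardy-optimal choice $\beta=\frac{(N-\alpha-2)^2}{4}$ lies at or below the threshold $\frac{(N-1)(N-3)}{4}$ where $\mu$ is still linear. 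In case (3) it does, and one concludes as in case (1). In case (2) it does not, so the paper takes $\beta=\frac{(N-1)(N-3)}{4}$, writes $\beta|u|^2=\beta\gamma_0|u|^2-\beta(\gamma_0-1)|u|^2$ with $\gamma_0=\frac{(N-\alpha-2)^2}{(N-1)(N-3)}>1$, applies the radial Hardy inequality only to the $\beta\gamma_0$ portion (which is then exactly calibrated to produce the factor $\gamma_0 C$ on the gradient term), and keeps the remainder $-(\gamma_0-1)\frac{\|g\|_{L^q}}{|\mathbb{S}^{N-1}|^{1/q}}\int |u|^2|x|^{-2-\alpha}\,dx$ as it stands; this splitting is the step your proposal is missing and without it inequality \eqref{sharp-whardy2} with its stated coefficients does not follow.
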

\begin{rem} $(i)$
 Under the condition $2N\alpha < (1+\alpha)^2$ and $\alpha>0$ with $N > 3$, inequality \eqref{sharp-whardy2} holds for a broader class of functions $g$ than inequality \eqref{shrp-whardy}, since $$L^{\frac{(N-\alpha-2)^2}{2(N-1)}+1}(\mathbb{S}^{N-1})\subsetneq L^\frac{N-1}{2}(\mathbb{S}^{N-1}).$$ Moreover, the assumption $2N\alpha < (1+\alpha)^2$ guarantees that $\gamma_0 > 1.$ 

\smallskip

$(ii)$
 The sharp inequality \eqref{sharp-Hardy} is obtained from \eqref{shrp-whardy} by setting $\alpha = 0$. Thus, Theorem \ref{sharp-wthm} extends Theorem \ref{sharp-thm} to the weighted case. The proof of Theorem \ref{sharp-wthm} relies on the \textit{Gagliardo-Nirenberg inequalities} on the unit sphere, established by Dolbeault, Esteban, and Laptev \cite{Ari2014} (see Lemma \ref{G-N-ineq}).

\end{rem}
\begin{rem}
In Theorem \ref{theorem2.1}, we also study \eqref{hom-we} for weight functions $g$ in an $L^q$ space, which may differ from the function spaces considered for $g$ in Theorem \ref{thm-p2} and Theorem \ref{sharp-wthm}. Moreover, Theorem \ref{theorem2.1} recovers the sharp weighted Hardy inequality in the special case $g \equiv 1$.
\end{rem}
Next, we obtain a weighted fractional Hardy inequality along with its sharp constant. In \cite{Frank2008}, Frank and Seiringer established the fractional Hardy inequality with the sharp constant. Let $N \geq 1$, $p \in [1,\infty)$, and $s \in (0,1)$ be such that $N > sp$. Then, Theorem 1.1 in \cite{Frank2008} establishes the following \textit{fractional Hardy inequality}:
 \begin{equation}\label{frac-norm}
   \int_{\mathbb{R}^N}\frac{|u(x)|^p}{|x|^{sp}} dx\leq \Lambda_{N,s,p}  \int_{\mathbb{R}^N}\int_{\mathbb{R}^N}\frac{|u(x)-u(y)|^p}{|x-y|^{N+sp}}dxdy,\quad\forall\,u\in \mathcal{C}^\infty_c(\mathbb{R}^N)
\end{equation}
  with 
    \begin{equation*}
       \Lambda_{N,s,p}^{-1}:=2\int_0^1r^{sp-1}|1-r^{(N-sp)/p}|^p\Psi_{N,s,p}(r) \,dr,
 \end{equation*}
 \begin{equation*}
     \Psi_{N,s,p}(r):=|\mathbb{S}^{N-2}|\int_{-1}^1\frac{(1-t^2)^{(N-3)/2}}{(1-2rt+r^2)^{(N+sp)/2}}\,dt,\quad N\geq2,
 \end{equation*}
 \begin{equation*}
     \Psi_{1,s,p}(r):=\left(\frac{1}{(1-r)^{1+sp}}+\frac{1}{(1+r)^{1+sp}}\right),\quad N=1.
 \end{equation*}
The constant $\Lambda_{N,s,p}$ is sharp. These inequalities are of great interest in the last decades, given their significant applications in stochastic processes and partial differential equations (see, for example, \cite{Bogdan22003,  dyda2011} and the references therein). 
For the generalisation of the Hardy inequality to the Orlicz setting, see \cite{cianchi2021, subha1, subha2, Cianchi1999,  Subha3} and the references therein.

\smallskip

For $p = 2$, Hoffmann-Ostenhof and Laptev \cite{ari2015} studied a weighted extension of \eqref{frac-norm} with the sharp constant. More precisely, Theorem 1.7 in \cite{ari2015} states that for any $g \in L^\frac{N}{2s}(\mathbb{S}^{N-1})$ with $N > 2s$, the following weighted fractional Hardy inequality holds:
\begin{equation}\label{frac-p=2}
        \int_{\mathbb{R}^N}\frac{ g (x/|x|)}{|x|^{2s}}|u(x)|^2 dx\leq \Lambda_{N,s,2}\frac{\|g \|_{L^\frac{N}{2s}(\mathbb{S}^{N-1})}}{|\mathbb{S}^{N-1}|^{2s/N}}\int_{\mathbb{R}^N}\int_{\mathbb{R}^N}\frac{|u(x)-u(y)|^2}{|x-y|^{N+2s}}dxdy,
\end{equation} 
for every $u\in \mathcal{C}^\infty_c(\mathbb{R}^N)$. Note that if $g$ is a constant function, then the above inequality reduces to the sharp fractional Hardy inequality \eqref{frac-norm} for $p = 2$. Now, we have the following sharp extension of the above inequality.
\begin{thm}\label{frac-hardy}
    Let $N\ge 1,\,p\in [1,\infty),$ and $s\in (0,1)$ be such that $N>sp$. If $ 0\le g \in L^\frac{N}{sp}(\mathbb{S}^{N-1})$, then the following inequality holds:
    \begin{equation}\label{eqn-frac-p}
    \int_{\mathbb{R}^N}\frac{ g (x/|x|)}{|x|^{sp}}|u(x)|^p dx\leq  C \int_{\mathbb{R}^N}\int_{\mathbb{R}^N}\frac{|u(x)-u(y)|^p}{|x-y|^{N+sp}}dxdy,\quad \, \forall\,u\in \mathcal{C}^\infty_c(\mathbb{R}^N).
    \end{equation}
 where the constant  \begin{equation*}\label{tau}
        C=\Lambda_{N,s,p}\frac{\| g \|_{L^\frac{N}{sp}(\mathbb{S}^{N-1})} }{|\mathbb{S}^{N-1}|^{sp/N}}.
     \end{equation*}
 is sharp in the sense that if $g\equiv 1$, then \eqref{eqn-frac-p} takes the sharp form of the fractional Hardy inequality.
\end{thm}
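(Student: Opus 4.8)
The plan is to remove the angular weight by a fibrewise Hölder inequality on spheres, and then to recognize the surviving quantity as Frank--Seiringer's sharp radial fractional Hardy inequality \eqref{frac-norm} applied to the $L^{p^*}$-spherical profile of $u$, where $p^*:=\dfrac{pN}{N-sp}$.

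Fix $0\le g\in L^{N/sp}(\mathbb S^{N-1})$. Since $N>sp$, the exponents $N/(sp)$ and $N/(N-sp)$ are conjugate and $p\cdot\tfrac{N}{N-sp}=p^*$. Passing to polar coordinates and applying Hölder on $\mathbb S^{N-1}$ at each radius,
\[
\int_{\mathbb R^N}\frac{g(x/|x|)}{|x|^{sp}}|u(x)|^p\,dx=\int_0^\infty r^{N-1-sp}\int_{\mathbb S^{N-1}} g(\vartheta)|u(r\vartheta)|^p\,d\vartheta\,dr\ \le\ \|g\|_{L^{N/sp}(\mathbb S^{N-1})}\int_0^\infty r^{N-1-sp}\,\|u(r\,\cdot)\|_{L^{p^*}(\mathbb S^{N-1})}^p\,dr.
\]
Set $U(r):=|\mathbb S^{N-1}|^{-1/p^*}\|u(r\,\cdot)\|_{L^{p^*}(\mathbb S^{N-1})}$ and regard $U$ as a radial function on $\mathbb R^N$. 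Using $p/p^*-1=-sp/N$ to collect the powers of $|\mathbb S^{N-1}|$, the right-hand side above equals $|\mathbb S^{N-1}|^{-sp/N}\|g\|_{L^{N/sp}(\mathbb S^{N-1})}\int_{\mathbb R^N}\frac{U(|x|)^p}{|x|^{sp}}\,dx$. Since $u\in\mathcal C^\infty_c(\mathbb R^N)$, the profile $U$ is Lipschitz with compact support (a brief Taylor expansion handles $x=0$), hence admissible in \eqref{frac-norm} by density; applying Frank--Seiringer to $U$ gives
\[
\int_{\mathbb R^N}\frac{g(x/|x|)}{|x|^{sp}}|u(x)|^p\,dx\ \le\ \Lambda_{N,s,p}\,\frac{\|g\|_{L^{N/sp}(\mathbb S^{N-1})}}{|\mathbb S^{N-1}|^{sp/N}}\iint_{\mathbb R^N\times\mathbb R^N}\frac{|U(|x|)-U(|y|)|^p}{|x-y|^{N+sp}}\,dx\,dy.
\]

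It then remains to prove the energy comparison
\[
\iint_{\mathbb R^N\times\mathbb R^N}\frac{|U(|x|)-U(|y|)|^p}{|x-y|^{N+sp}}\,dx\,dy\ \le\ \iint_{\mathbb R^N\times\mathbb R^N}\frac{|u(x)-u(y)|^p}{|x-y|^{N+sp}}\,dx\,dy,
\]
and this is where I expect the difficulty to be concentrated. Passing to polar coordinates, it is enough to compare, for the inner double sphere integrals at radii $r,\rho$, the quantity $|U(r)-U(\rho)|^p$ times the angular kernel mass $\iint_{\mathbb S^{N-1}\times\mathbb S^{N-1}}|r\vartheta-\rho\sigma|^{-(N+sp)}d\vartheta\,d\sigma$ against $\iint_{\mathbb S^{N-1}\times\mathbb S^{N-1}}|u(r\vartheta)-u(\rho\sigma)|^p|r\vartheta-\rho\sigma|^{-(N+sp)}d\vartheta\,d\sigma$. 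Note that $u\mapsto U$ preserves the $L^{p^*}$-mass on each sphere but is \emph{not} a Hardy--Littlewood rearrangement, so one cannot simply invoke symmetrization; and a pointwise-in-$(r,\rho)$ estimate fails, because near $r=\rho$ the $L^{p^*}$-mean $U$ overshoots the $L^p$-mean that a bare triangle inequality in $L^p$ of the angular kernel would supply, so the averaging in $(r,\rho)$ must be exploited. For $p=2$ this can be carried out by a Hilbert-space argument (diagonalizing the Gagliardo form in spherical harmonics, using that the radial eigenvalues do not decrease with the harmonic degree, and feeding this into a sharp Sobolev inequality on $\mathbb S^{N-1}$ for the exponent $p^*$, which is sub-critical there), along the lines of Hoffmann-Ostenhof--Laptev. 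For general $p\in[1,\infty)$ I would instead aim to prove the comparison directly from Frank--Seiringer's kernel representation (the function $\Psi_{N,s,p}$), bounding the double sphere integral by a suitable Hölder/convexity argument; producing such an argument with the correct normalization is the main obstacle.

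Granting the comparison, chaining the displays gives \eqref{eqn-frac-p} with $C=\Lambda_{N,s,p}\,\|g\|_{L^{N/sp}(\mathbb S^{N-1})}\,|\mathbb S^{N-1}|^{-sp/N}$. Sharpness is then immediate: for $g\equiv1$ one has $\|g\|_{L^{N/sp}(\mathbb S^{N-1})}=|\mathbb S^{N-1}|^{sp/N}$, so $C=\Lambda_{N,s,p}$, and on radial test functions the Hölder step is an equality and $U=u$, so \eqref{eqn-frac-p} reduces to \eqref{frac-norm}, which is known to be optimal.
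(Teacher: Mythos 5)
Your proof is incomplete: the energy comparison
\[
\iint_{\mathbb R^N\times\mathbb R^N}\frac{|U(|x|)-U(|y|)|^p}{|x-y|^{N+sp}}\,dx\,dy\ \le\ \iint_{\mathbb R^N\times\mathbb R^N}\frac{|u(x)-u(y)|^p}{|x-y|^{N+sp}}\,dx\,dy
\]
is the entire content of the theorem in your scheme, and you do not prove it -- you only describe why the obvious pointwise attempts fail and speculate about how one might proceed for $p=2$ versus general $p$. As you yourself observe, $u\mapsto U$ (the normalized $L^{p^*}$ spherical profile, $p^*>p$) is not a rearrangement, so no Riesz/P\'olya--Szeg\H{o} machinery applies to it, and the normalized $L^{p^*}$ mean dominates the normalized $L^p$ mean on each sphere, which kills any fibrewise estimate. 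A spherical-harmonics argument would in any case be restricted to $p=2$, while the theorem is claimed for all $p\in[1,\infty)$. So the argument stalls exactly at its crux; the Hölder reduction and the sharpness discussion are fine, but they are the easy parts.

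The paper avoids this obstacle entirely by symmetrizing in $\mathbb R^N$ rather than fibrewise. By the Hardy--Littlewood inequality (Proposition \ref{prop1}(1)),
\[
\int_{\mathbb R^N}\frac{g(x/|x|)}{|x|^{sp}}|u|^p\,dx\ \le\ \int_{\mathbb R^N}\Bigl(\frac{g(x/|x|)}{|x|^{sp}}\Bigr)^{*}|u^*|^p\,dx,
\]
and a direct computation of the distribution function of the homogeneous weight shows that its symmetric decreasing rearrangement is exactly
\[
\Bigl(\frac{g(x/|x|)}{|x|^{sp}}\Bigr)^{*}=\frac{\|g\|_{L^{N/sp}(\mathbb S^{N-1})}}{|\mathbb S^{N-1}|^{sp/N}}\,\frac{1}{|x|^{sp}},
\]
which produces the same constant your Hölder step does. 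One then applies Frank--Seiringer \eqref{frac-norm} to $u^*$ and finishes with the fractional P\'olya--Szeg\H{o} inequality (Proposition \ref{prop1}(3)), which is a known theorem of Almgren--Lieb and plays precisely the role of your missing comparison -- but for $u^*$ instead of $U$, where it is actually available. If you want to salvage your write-up, replace the $L^{p^*}$-profile reduction by this rearrangement argument; note it uses the hypothesis $g\ge 0$, which your Hölder step does not need but the Hardy--Littlewood step does.
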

\begin{rem}
Notice that the inequalities \eqref{frac-norm} and \eqref{frac-p=2} follow from \eqref{eqn-frac-p} by taking $g\equiv 1$ and $p=2$, respectively. 
\end{rem}

\smallskip

The rest of the article is organised as follows: Section \ref{section-peli} presents some known results that are essential for this article. The proofs of Theorem \ref{thm-p2}, Theorem \ref{sharp-wthm}, Theorem \ref{frac-hardy}, and Theorem \ref{theorem2.1} are given in Section \ref{section-main}. 

\section{Preliminaries}\label{section-peli} In this section, we first describe symmetrisation and some of its properties. We then state several known results and inequalities that are used to prove the theorems presented in this article.

\subsection{Symmetrisation}
Let $\Omega\subset \mathbb{R}^N$ be a measurable set and $f$ be a measurable function on $ \mathbb{R}^N$. Now, we define the following notions:
\begin{itemize}
\item \textbf{Symmetric rearrangement $\Omega^*$:}  The
symmetric rearrangement of the set $\Omega$, to be the open ball centred at the origin
whose volume is that of $\Omega$. Thus
\begin{equation}\label{symm-set}
   \Omega^*=\{x\,:\,|x|<r\},\quad\,\text{with}\,\,(|\mathbb{S}^{N-1}|/N)r^N=|\Omega|, 
\end{equation}
 where $|E|$ denotes the Lebesgue measure of a set $E\subset\mathbb{R}^N$.
\item \textbf{Symmetric decreasing rearrangement $f^*$:}  The symmetric decreasing rearrangement, $f^*$ is defined as 
\begin{equation*}
  f^{*}(x)=\int_0^\infty\chi_{\{y\in  \mathbb{R}^N:\,|f(y)|>t\}^*}(x)\,dt 
  \end{equation*}
  where $\chi_A$ denotes the characteristic function of $A\subset\mathbb{R}^N$.
\end{itemize}

Next, we state three important inequalities related to symmetrisation. 
\begin{prop}\label{prop1}
Let $N\ge 1$. Then the following inequalities hold:
\begin{enumerate}
    \item \textbf{Hardy-Littlewood inequality \cite[Theorem 3.2.19]{Edmunds2004}:}  Let $u$ and $v$ be two nonnegative measurable functions. Then $$\int_{\mathbb{R}^N} u(x)v(x) dx\leq \int_{\mathbb{R}^N}u^*(x)v^*(x)dx.$$
    \item  \textbf{Weighted P\'olya-Szeg\"o inequality \cite[Theorem 8.1]{alvino}:}  Let  $p\in [1,\infty)$ and $\alpha\in \mathbb{R}$ be such that $N>p+\alpha.$ Then for every  $u\in \mathcal{C}_c^\infty(\mathbb{R}^N)$, $$\int_{\mathbb{R}^N}\frac{|\nabla u^*(x)|}{|x|^\alpha}dx\leq \int_{\mathbb{R}^N}\frac{|\nabla u(x)|}{|x|^\alpha}dx.$$ 
    \item  \textbf{Fractional P\'olya-Szeg\"o inequality \cite[Theorem 9.2]{Lieb1989}:}  Let $p\in [1,\infty)$. Then for every  $u\in \mathcal{C}_c^\infty(\mathbb{R}^N)$, $$\int_{\mathbb{R}^N}\int_{\mathbb{R}^N}\frac{|u(x)-u(y)|^p}{|x-y|^{N+sp}}dxdy\leq \int_{\mathbb{R}^N}\int_{\mathbb{R}^N}\frac{|u^*(x)-u^*(y)|^p}{|x-y|^{N+sp}}dxdy.$$ 
\end{enumerate}
\end{prop}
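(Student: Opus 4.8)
The plan is to prove the three inequalities by the classical rearrangement machinery, in increasing order of difficulty, using throughout the two defining properties of the symmetric decreasing rearrangement: it is \emph{equimeasurable} with $u$, so that $|\{u>t\}|=|\{u^*>t\}|$ for every $t>0$ (writing $\{u>t\}=\{x\in\mathbb{R}^N:u(x)>t\}$), and its superlevel sets are concentric open balls centred at the origin, hence nested. For the Hardy--Littlewood inequality (1) I would start from the layer-cake representation $u=\int_0^\infty\chi_{\{u>s\}}\,ds$, and similarly for $v$; Fubini's theorem then gives
\begin{equation*}
\int_{\mathbb{R}^N}u\,v\,dx=\int_0^\infty\!\!\int_0^\infty\big|\{u>s\}\cap\{v>t\}\big|\,ds\,dt.
\end{equation*}
Since $|A\cap B|\le\min\{|A|,|B|\}$ always, whereas for the nested balls $\{u^*>s\}$ and $\{v^*>t\}$ the intersection is the smaller ball and hence attains this minimum, equimeasurability yields $\big|\{u>s\}\cap\{v>t\}\big|\le\big|\{u^*>s\}\cap\{v^*>t\}\big|$ for each pair $(s,t)$; integrating over $(0,\infty)^2$ gives (1). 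This step is routine.

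For the weighted P\'olya--Szeg\"o inequality (2) I would combine the coarea formula with a weighted isoperimetric inequality. By coarea,
\begin{equation*}
\int_{\mathbb{R}^N}\frac{|\nabla u|}{|x|^\alpha}\,dx=\int_0^\infty P_\alpha(\{u>t\})\,dt,\qquad P_\alpha(E):=\int_{\partial^*E}\frac{1}{|x|^\alpha}\,d\mathcal{H}^{N-1},
\end{equation*}
and the same identity holds for $u^*$, whose superlevel set $\{u^*>t\}$ is the centred ball of the same Lebesgue measure as $\{u>t\}$. The heart of the matter is the statement that, in the range $N>p+\alpha$, the centred ball minimises the weighted perimeter $P_\alpha$ among all sets of prescribed Lebesgue measure; this is exactly the weighted isoperimetric inequality of \cite{alvino}. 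Granting it, $P_\alpha(\{u^*>t\})\le P_\alpha(\{u>t\})$ for a.e.\ $t$, and integration concludes; the power-$p$ analogue follows by inserting H\"older's inequality into the coarea slicing. I expect this weighted isoperimetric inequality to be the genuine obstacle, as its validity is precisely where the hypothesis $N>p+\alpha$ enters and where the homogeneity of the weight $|x|^{-\alpha}$ is used.

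For the fractional P\'olya--Szeg\"o inequality (3) I would reduce to a rearrangement inequality for a convolution-type functional with the symmetric decreasing kernel $k(z)=|z|^{-(N+sp)}$, the goal being that $\int\int|u(x)-u(y)|^p\,k(x-y)\,dx\,dy$ does not increase under symmetric decreasing rearrangement. For $p=2$ this is cleanest: expanding $|u(x)-u(y)|^2$ and using $\|u\|_2=\|u^*\|_2$ reduces the claim to $\langle u,\,k*u\rangle\le\langle u^*,\,k*u^*\rangle$, which is the Riesz rearrangement inequality for the symmetric decreasing $k$ (equivalently, a heat-semigroup argument). For general $p\in[1,\infty)$ I would instead argue by polarization: two-point rearrangement does not increase the double integral, a consequence of the convexity of $t\mapsto|t|^p$ together with the monotonicity of $k$, and iterated polarizations converge to the symmetric decreasing rearrangement, so the inequality passes to the limit. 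This is the approach behind Theorem~9.2 of \cite{Lieb1989}. The general-$p$ case is the more delicate one, the subtlety being to verify that each polarization decreases the functional and that the limiting rearrangement preserves the Gagliardo seminorm.
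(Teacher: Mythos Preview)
The paper does not prove this proposition at all: it is stated as a collection of known preliminary results, each accompanied by a literature citation (Edmunds--Evans for (1), Alvino et al.\ for (2), Almgren--Lieb for (3)), and is then invoked as a black box in the proofs of Theorems~\ref{theorem2.1} and~\ref{frac-hardy}. Your sketch is a correct and standard outline of how one actually establishes each of the three inequalities, so there is no discrepancy to report beyond the fact that you are supplying arguments where the paper merely gives references.
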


\subsection{Sobolev embedding}
For $p\in (1,\infty),$ let $H^p_1(\mathbb{S}^{N-1})$ denote the Sobolev space of order $p,$ that is the completion of $C^\infty(\mathbb{S}^{N-1})$ with respect to the norm $$\|u\|^p_{H_1^p(\mathbb{S}^{N-1})}=\|u\|_{L^p(\mathbb{S}^{N-1})}^p+\|\nabla_\vartheta u\|_{L^p(\mathbb{S}^{N-1})}^p,$$ where $\Delta_\vartheta$ is the Laplace-Beltrami operator on the unit sphere $\mathbb{S}^{N-1}\subset \mathbb{R}^N$. We now state the Sobolev inequalities for $H^p_1(\mathbb{S}^{N-1})$ (see \cite[Theorem 2.29, Theorem 2.51, Lemma 2.2]{Aubin2}, \cite{Aubin1976, bestconst}).
\begin{thm}\label{thm-sphere}
Let $N \geq 2$, $p \in (1,\infty),$ and $r_0>1.$ Then there exists a constant 
$C > 0$, independent of $u$, such that  
\[
\|u\|_{L^r(\mathbb{S}^{N-1})}^p 
\;\leq\; 
C \left( \|\nabla_\vartheta u\|_{L^p(\mathbb{S}^{N-1})}^p 
       + \|u\|_{L^p(\mathbb{S}^{N-1})}^p \right),\quad\forall \,u \in H_1^p(\mathbb{S}^{N-1}),
\]
where 
\[
r = 
\begin{cases}
\dfrac{(N-1)p}{N-p-1}, & \text{if } p < N-1, \\
r_0, & \text{if } p = N-1, \\
\infty, & \text{if } p > N-1. 
\end{cases}
\]
\end{thm}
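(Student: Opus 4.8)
The plan is to reduce the statement to the classical Sobolev and Morrey inequalities on $\mathbb{R}^{N-1}$ by means of a finite atlas and a partition of unity, exploiting that $\mathbb{S}^{N-1}$ is a smooth compact Riemannian manifold of dimension $n:=N-1$. By density of $C^\infty(\mathbb{S}^{N-1})$ in $H_1^p(\mathbb{S}^{N-1})$ it suffices to prove the inequality for smooth $u$, the general case following by approximation in the $H_1^p$-norm.

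First I would fix a finite atlas $\{(U_j,\phi_j)\}_{j=1}^m$ covering $\mathbb{S}^{N-1}$, where each $\phi_j\colon U_j\to V_j\subset\mathbb{R}^{n}$ is a diffeomorphism onto a bounded open set. Since the round metric $g$ is smooth and positive definite on the compact sphere, in each chart the coefficients of $g$ and $g^{-1}$ and the density $\sqrt{\det g}$ are bounded above and below by positive constants; hence for any $w$ supported in $U_j$, with pullback $\tilde w:=w\circ\phi_j^{-1}$, one has the two-sided comparisons $\|w\|_{L^p(\mathbb{S}^{N-1})}\approx\|\tilde w\|_{L^p(V_j)}$ and $\|\nabla_\vartheta w\|_{L^p(\mathbb{S}^{N-1})}\approx\|\nabla\tilde w\|_{L^p(V_j)}$, with constants depending only on the charts. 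Next I would take a smooth partition of unity $\{\psi_j\}_{j=1}^m$ subordinate to $\{U_j\}$ and decompose $u=\sum_{j=1}^m\psi_j u$, so that each $\psi_j u$ is supported in $U_j$ and its pullback $w_j:=(\psi_j u)\circ\phi_j^{-1}$, extended by zero, is a compactly supported $W^{1,p}$ function on $\mathbb{R}^{n}$. The Leibniz rule $\nabla_\vartheta(\psi_j u)=\psi_j\nabla_\vartheta u+u\,\nabla_\vartheta\psi_j$ together with the boundedness of $\psi_j$ and $\nabla_\vartheta\psi_j$ yields $\|\psi_j u\|^p_{H_1^p(\mathbb{S}^{N-1})}\le C\,\|u\|^p_{H_1^p(\mathbb{S}^{N-1})}$.

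With this localization in place, I would apply the appropriate Euclidean embedding to each $w_j$ on $\mathbb{R}^{n}$. When $p<n=N-1$, the Gagliardo--Nirenberg--Sobolev inequality gives $\|w_j\|_{L^{r}(\mathbb{R}^{n})}\le C\|\nabla w_j\|_{L^p(\mathbb{R}^n)}$ with the critical exponent $r=\frac{np}{n-p}=\frac{(N-1)p}{N-p-1}$; when $p>n$, Morrey's inequality gives $\|w_j\|_{L^\infty(\mathbb{R}^n)}\le C\|w_j\|_{W^{1,p}(\mathbb{R}^n)}$, which is the case $r=\infty$. Transferring back through the chart comparisons and summing via $\|u\|_{L^r(\mathbb{S}^{N-1})}\le\sum_{j=1}^m\|\psi_j u\|_{L^r(\mathbb{S}^{N-1})}$ and the elementary bound $\big(\sum_{j=1}^m a_j\big)^p\le m^{p-1}\sum_{j=1}^m a_j^p$, one obtains $\|u\|^p_{L^r(\mathbb{S}^{N-1})}\le C\sum_{j}\|w_j\|^p_{W^{1,p}}\le C\|u\|^p_{H_1^p(\mathbb{S}^{N-1})}$, which is the asserted inequality.

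The borderline case $p=N-1$ (i.e.\ $p=n$) is the only delicate point, since the Sobolev exponent $\frac{np}{n-p}$ degenerates and $W^{1,n}$ fails to embed into $L^\infty$. Here I would exploit that each $w_j$ is supported in the bounded set $V_j$: given the target exponent $r_0\in(1,\infty)$, I would choose a subcritical $q\in[1,n)$ with $\frac{nq}{n-q}\ge r_0$ (possible since $\frac{nq}{n-q}\to\infty$ as $q\to n^-$) and apply the subcritical Gagliardo--Nirenberg--Sobolev inequality together with the inclusion $L^{nq/(n-q)}(V_j)\hookrightarrow L^{r_0}(V_j)$, valid because $V_j$ is bounded, to get $\|w_j\|_{L^{r_0}(V_j)}\le C\|\nabla w_j\|_{L^q(V_j)}$. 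Hölder's inequality on the bounded domain, $\|\nabla w_j\|_{L^q(V_j)}\le|V_j|^{1/q-1/n}\|\nabla w_j\|_{L^n(V_j)}$, upgrades this to a bound by $\|w_j\|_{W^{1,n}(V_j)}$, and the same summation over the partition of unity closes the case. The main obstacle is precisely organizing this $p=n$ argument so that, despite the degeneration of the critical exponent, the final estimate is controlled by the first power of $\|\nabla_\vartheta u\|_{L^p(\mathbb{S}^{N-1})}^p+\|u\|_{L^p(\mathbb{S}^{N-1})}^p$; the choice of a subcritical $q$ together with the boundedness of the coordinate patches makes this transparent.
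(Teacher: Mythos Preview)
Your argument is correct and is precisely the standard localization proof of Sobolev embeddings on compact Riemannian manifolds. Note, however, that the paper does not supply its own proof of this statement: Theorem~\ref{thm-sphere} is recorded in the preliminaries with citations to Aubin (\cite[Theorem~2.29, Theorem~2.51, Lemma~2.2]{Aubin2}, \cite{Aubin1976, bestconst}) and is used as a black box. Your partition-of-unity reduction to the Euclidean Gagliardo--Nirenberg--Sobolev and Morrey inequalities is exactly the route taken in those references, so in substance you are reproducing Aubin's proof rather than offering an alternative to something the paper does.
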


\subsection{Weighted Hardy inequality:}  We recall the one-dimensional weighted Hardy inequality obtained by Opic and Kufner in \cite[Lemma 1.3]{Kufner2}, which will be used in the proofs of Theorem \ref{thm-p2} and Theorem \ref{sharp-wthm}.
 Let $AC_R(0,\infty)$ denote the set of all functions that are absolutely continuous on every compact subinterval $[a,b] \subset (0,\infty)$ and satisfy $\lim_{t \to \infty} u(t) = 0.$

\begin{lem}\label{weighted-Hardy}
Let $p\in (1,\infty)$ and $\beta>p-1.$ Then the following weighted Hardy inequality holds:
    \begin{equation*}
  \int_0^{\infty}|f(r)|^pr^{\beta-p}dr\le \left(\frac{p}{\beta-p+1}\right)^p  \int_0^{\infty}|f^\prime(r)|^pr^\beta dr,\quad\forall\,u\in AC_R(0,\infty).
\end{equation*}
The constant in this inequality is sharp.
\end{lem}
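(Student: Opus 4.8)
The plan is to reduce the inequality to the elementary pointwise bound coming from the condition $\lim_{t\to\infty}f(t)=0$, and then run an integration‑by‑parts argument that produces the sharp constant by itself. We may assume $f\ge 0$ (replace $f$ by $|f|$, which is again in $AC_R(0,\infty)$, leaves the left‑hand side unchanged, and satisfies $\big|(|f|)'(r)\big|\le|f'(r)|$ a.e.) and that $J:=\int_0^\infty|f'(r)|^p r^\beta\,dr<\infty$, otherwise there is nothing to prove. Writing $f(r)=-\int_r^\infty f'(t)\,dt$ gives $0\le f(r)\le F(r):=\int_r^\infty|f'(t)|\,dt$, and since $\beta>p-1$ a single application of Hölder's inequality shows that $F(r)<\infty$ for every $r>0$ and that $F$ is continuous. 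Hence it suffices to prove $\int_0^\infty F^p r^{\beta-p}\,dr\le\big(\frac{p}{\beta-p+1}\big)^p J$.

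The key step is integration by parts on $[\eps,R]\subset(0,\infty)$, using $\beta-p+1>0$ so that $r^{\beta-p}=\frac{d}{dr}\big(\frac{r^{\beta-p+1}}{\beta-p+1}\big)$, together with $F'=-|f'|$. The boundary contribution at $\eps$ has a sign allowing it to be discarded, while the one at $R$ equals $\frac{1}{\beta-p+1}F(R)^p R^{\beta-p+1}$, which tends to $0$ as $R\to\infty$ because Hölder's inequality and $\beta>p-1$ give $F(R)\lesssim(\text{tail of }J)^{1/p}\,R^{(p-1-\beta)/p}$. What remains is $\frac{p}{\beta-p+1}\int_\eps^R F^{p-1}|f'|\,r^{\beta-p+1}\,dr$, to which we apply Hölder with exponents $\frac{p}{p-1}$ and $p$ after splitting $r^{\beta-p+1}=r^{(\beta-p)(p-1)/p}\cdot r^{\beta/p}$ (the exponents add up correctly); this bounds it by $\big(\int_\eps^R F^p r^{\beta-p}\,dr\big)^{(p-1)/p}J^{1/p}$. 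Writing $I_{\eps,R}:=\int_\eps^R F^p r^{\beta-p}\,dr<\infty$, we obtain $I_{\eps,R}\le b_R+\frac{p}{\beta-p+1}I_{\eps,R}^{(p-1)/p}J^{1/p}$ with $b_R\to 0$, which forces $I_{\eps,R}$ to stay bounded; hence $I:=\int_0^\infty F^p r^{\beta-p}\,dr=\lim I_{\eps,R}<\infty$, and letting $\eps\to0$, $R\to\infty$ and dividing by $I^{(p-1)/p}$ (the case $I=0$ being trivial) yields the claim with constant $\big(\frac{p}{\beta-p+1}\big)^p$. An essentially equivalent alternative is the substitution $r=\phi(s)$ with $\phi'=\phi^{\beta/(p-1)}$, which removes both power weights simultaneously and reduces the statement to the classical Hardy inequality $\int_0^\infty|g|^p s^{-p}\,ds\le(\frac{p}{p-1})^p\int_0^\infty|g'|^p\,ds$ for functions with $g(0)=0$, the constant being read off from the bookkeeping.

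For sharpness one tests against approximations of the formal extremizer $r\mapsto r^{-\gamma}$ with $\gamma=\frac{\beta-p+1}{p}$: for this $\gamma$ both integrands reduce to multiples of $r^{-1}$, and the ratio of the two sides equals exactly $\gamma^{-p}=\big(\frac{p}{\beta-p+1}\big)^p$. Truncating $r^{-\gamma}$ so that it is constant on $(0,1]$ and vanishes beyond $r=n$ (joined continuously) produces admissible $f_n\in AC_R(0,\infty)$ along which the quotient of the two sides converges to $\big(\frac{p}{\beta-p+1}\big)^p$ as $n\to\infty$, so the constant cannot be improved.

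The delicate point is the passage from $[\eps,R]$ to $(0,\infty)$: one must verify that the boundary term at $R$ vanishes and that $I_{\eps,R}$ is a priori finite and stays uniformly bounded. This is precisely where the hypothesis $\beta>p-1$ is used, since for $\beta>p-1$ the behavior of $f$ near the origin is only loosely controlled by $J$; everything else is routine.
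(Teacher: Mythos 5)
Your proof is correct. Note, however, that the paper does not prove this lemma at all: it simply quotes it from Opic--Kufner \cite[Lemma 1.3]{Kufner2}, so any comparison is with the classical literature rather than with an argument in the text. What you give is a clean, self-contained version of the standard proof: dominating $|f|$ by the tail primitive $F(r)=\int_r^\infty|f'(t)|\,dt$ (finite precisely because $\beta>p-1$ makes $t^{-\beta/(p-1)}$ integrable at infinity), integrating by parts on $[\eps,R]$, discarding the signed boundary term at $\eps$, killing the one at $R$ via the H\"older tail estimate $F(R)^pR^{\beta-p+1}\lesssim\int_R^\infty|f'|^pt^\beta\,dt\to0$, and closing the loop with H\"older plus the absorption trick $x\le b+cx^{(p-1)/p}$. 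All exponents check out (in particular $(\beta-p)(p-1)/p+\beta/p=\beta-p+1$), the a priori finiteness of $I_{\eps,R}$ is correctly secured before dividing, and the sharpness argument with truncations of $r^{-(\beta-p+1)/p}$, for which both integrands become multiples of $r^{-1}$ with ratio exactly $\bigl(\tfrac{p}{\beta-p+1}\bigr)^p$ and the cut-off contributions are $O(1)$ against a $\log n$ main term, is the standard one and is carried out correctly. The only cosmetic remark is that the reduction to $f\ge0$ is redundant once you bound $|f(r)|\le F(r)$ directly; otherwise there is nothing to add.
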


\section{Weighted Hardy inequality}\label{section-main}
In this section, we prove all the theorems introduced in the Introduction. We start with a lemma that establishes a weighted Hardy-type inequality on the unit sphere $\mathbb{S}^{N-1}$, which will play a key role in the proof of Theorem \ref{thm-p2}. 
 
\begin{lem}\label{prop-1}
Let $N\ge 2$, $p\in (1,\infty),$ and $\Delta_\vartheta$ be the Laplace-Beltrami operator on $\mathbb{S}^{N-1}$. Assume that  $g\in L^q(\mathbb{S}^{N-1})$, where \[
q =
\begin{cases}
\dfrac{N-1}{p}, & \text{if } p < N-1, \\
1, & \text{if } p > N-1 ,
\end{cases}
\]
and  $q>1$ when $ p = N-1.$
Then there exists a constant $C>0,$ independent of $u,$ such that for every $u\,\in H^p_1(\mathbb{S}^{N-1}),$
\begin{equation*}
    \int_{\mathbb{S}^{N-1}}  g (\vartheta)|u(\vartheta)|^pd\vartheta
  \leq C \| g \|_{L^q(\mathbb{S}^{N-1})} \left(\int_{\mathbb{S}^{N-1}}|u(\vartheta)|^p d\vartheta+\int_{\mathbb{S}^{N-1}}|\nabla_{\vartheta} u(\vartheta)|^pd\vartheta \right).
\end{equation*}
\end{lem}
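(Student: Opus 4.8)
The plan is to reduce the weighted Hardy-type inequality on $\mathbb{S}^{N-1}$ to the Sobolev embedding of Theorem \ref{thm-sphere} via H\"older's inequality. First I would apply H\"older's inequality to the left-hand side, splitting the integrand $g(\vartheta)|u(\vartheta)|^p$ as a product of $g$ and $|u|^p$: with exponents $q$ and $q'=q/(q-1)$ we obtain
\[
\int_{\mathbb{S}^{N-1}} g(\vartheta)|u(\vartheta)|^p\,d\vartheta \le \|g\|_{L^q(\mathbb{S}^{N-1})}\,\Big(\int_{\mathbb{S}^{N-1}}|u(\vartheta)|^{pq'}\,d\vartheta\Big)^{1/q'} = \|g\|_{L^q(\mathbb{S}^{N-1})}\,\|u\|^p_{L^{pq'}(\mathbb{S}^{N-1})}.
\]
So it suffices to control $\|u\|^p_{L^{pq'}(\mathbb{S}^{N-1})}$ by the $H^p_1$-norm, i.e. to check that the Sobolev exponent $r$ in Theorem \ref{thm-sphere} can be taken to be at least $pq'$ in each of the three regimes for $p$ relative to $N-1$.

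The main (and essentially only) step is the bookkeeping of exponents. When $p<N-1$, we have $q=(N-1)/p$, hence $q'=q/(q-1)=(N-1)/(N-1-p)$ and $pq' = (N-1)p/(N-1-p)$, which is exactly the critical Sobolev exponent $r$ in Theorem \ref{thm-sphere}; so the embedding $H^p_1(\mathbb{S}^{N-1})\hookrightarrow L^{pq'}(\mathbb{S}^{N-1})$ applies directly. When $p>N-1$, we take $q=1$, so $q'=\infty$ and $pq'=\infty$; Theorem \ref{thm-sphere} gives $H^p_1(\mathbb{S}^{N-1})\hookrightarrow L^\infty(\mathbb{S}^{N-1})$, and since $\mathbb{S}^{N-1}$ has finite measure this controls $\|u\|_{L^{pq'}}$ as well (in fact $pq'=\infty$ here). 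When $p=N-1$, we only know $q>1$, so $q'<\infty$ and $pq'<\infty$ is some finite exponent; choosing $r_0=pq'$ (which is $>1$) in Theorem \ref{thm-sphere} yields $H^p_1(\mathbb{S}^{N-1})\hookrightarrow L^{pq'}(\mathbb{S}^{N-1})$. In every case we then absorb the Sobolev constant into $C$ and arrive at
\[
\int_{\mathbb{S}^{N-1}} g(\vartheta)|u(\vartheta)|^p\,d\vartheta \le C\,\|g\|_{L^q(\mathbb{S}^{N-1})}\Big(\|u\|^p_{L^p(\mathbb{S}^{N-1})}+\|\nabla_\vartheta u\|^p_{L^p(\mathbb{S}^{N-1})}\Big),
\]
which is the claim.

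I anticipate no serious obstacle; the proof is a short two-line argument once the exponent identities $q'=(N-1)/(N-1-p)$ and $pq'=(N-1)p/(N-1-p)$ are recorded. The only point requiring a word of care is the borderline case $p=N-1$: there one must observe that the hypothesis $q>1$ forces $q'$ finite, so that $pq'$ is a legitimate finite exponent to which the $r=r_0$ branch of Theorem \ref{thm-sphere} applies; without $q>1$ (i.e. $q=1$, $q'=\infty$) the sphere Sobolev embedding into $L^\infty$ fails when $p=N-1$, which is precisely why that case is excluded from the hypotheses. A density remark—extending from $C^\infty(\mathbb{S}^{N-1})$ to all of $H^p_1(\mathbb{S}^{N-1})$—closes the argument, although since both sides are continuous in the $H^p_1$-norm this is automatic.
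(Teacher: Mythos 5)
Your proposal is correct and follows essentially the same route as the paper: H\"older's inequality with the conjugate pair $(q,q')$ to reduce the left-hand side to $\|g\|_{L^q(\mathbb{S}^{N-1})}\|u\|^p_{L^{pq'}(\mathbb{S}^{N-1})}$, followed by the sphere Sobolev embedding of Theorem \ref{thm-sphere} with $r=pq'$ in each of the three regimes. The exponent bookkeeping ($pq'=\tfrac{(N-1)p}{N-1-p}$ for $p<N-1$, $pq'=\infty$ for $p>N-1$, and $r_0=pq'<\infty$ for $p=N-1$) matches the paper's argument exactly.
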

\begin{proof} Let
$u\in H^p_1(\mathbb{S}^{N-1})$ and $ g \in L^q(\mathbb{S}^{N-1})$. If $p<N-1,$ then H\"older's inequality for the conjugate pair $\left(\frac{N-1}{p},\frac{N-1}{N-p-1}\right)$  yields 
$$ \int_{\mathbb{S}^{N-1}}  g (\vartheta)|u(\vartheta)|^pd\vartheta \leq \| g \|_{L^\frac{N-1}{p}(\mathbb{S}^{N-1})} \|u\|^p_{L^\frac{(N-1)p}{N-p-1}(\mathbb{S}^{N-1})}= \| g \|_{L^q(\mathbb{S}^{N-1})} \|u\|^p_{L^\frac{(N-1)p}{N-p-1}(\mathbb{S}^{N-1})}.$$
Again, applying H\"older's inequality for the conjugate pair $\left(q,q^\prime=\frac{q}{q-1}\right)$, we obtain
 $$\int_{\mathbb{S}^{N-1}}  g (\vartheta)|u(\vartheta)|^pd\vartheta \leq 
\|g\|_{L^q(\mathbb{S}^{N-1})}\|u\|^p_{L^{pq^\prime}(\mathbb{S}^{N-1})},\quad q>1.$$ Moreover, 
$$\int_{\mathbb{S}^{N-1}}  g (\vartheta)|u(\vartheta)|^pd\vartheta \leq \|g\|_{L^1(\mathbb{S}^{N-1})}\|u\|^p_{L^{\infty}(\mathbb{S}^{N-1})}.$$
Thus, collecting the above inequalities, we can write
$$\int_{\mathbb{S}^{N-1}}  g (\vartheta)|u(\vartheta)|^pd\vartheta \leq \|g\|_{L^q(\mathbb{S}^{N-1})}\|u\|^p_{L^r(\mathbb{S}^{N-1})},$$
where  \[
r = 
\begin{cases}
\dfrac{(N-1)p}{N-p-1}, & \text{if } p < N-1, \\
pq^\prime>1, & \text{if } p = N-1, \\
\infty, & \text{if } p > N-1. 
\end{cases}
\]
Hence, by applying Theorem \ref{thm-sphere}, we obtain
\[
\int_{\mathbb{S}^{N-1}}  g (\vartheta)|u(\vartheta)|^pd\vartheta \leq C\|g\|_{L^q(\mathbb{S}^{N-1})}\left( \|u\|^p_{L^p(\mathbb{S}^{N-1})}+\|\nabla_\vartheta u\|^p_{L^p(\mathbb{S}^{N-1})}\right),
\]
where $C>0$ is a constant independent of $u$.

This completes the proof.
\end{proof}

\smallskip

Now, we proceed to prove Theorem \ref{thm-p2} by using Lemma \ref{prop-1}.

\smallskip

\noindent \textbf{Proof of Theorem \ref{thm-p2}:} 
   Let $N\ge 2,\,p\in (1,\infty),$ and $\alpha\in \mathbb{R}$ be such that $N> p+\alpha$.  Let $u\,\in \mathcal{C}_c^\infty(\mathbb{R}^N)$ and  $x=(r,\vartheta)\in \mathbb{R}^N$ denote the polar coordinates in $\mathbb{R}^N.$ Assume that $g\in L^q(\mathbb{S}^{N-1})$, where \[q =
\begin{cases}
\dfrac{N-1}{p}, & \text{if } p < N-1, \\
1, & \text{if } p > N-1 ,
\end{cases}
\] 
and  $q>1$ when $ p = N-1.$ We have
   $$\int_{\mathbb{R}^N} \frac{ g (x/|x|)}{|x|^{p+\alpha}}|u(x)|^pdx= \int_0^\infty \int_{\mathbb{S}^{N-1}}  g (\vartheta)|u|^p  r^{N-p-\alpha-1} d\vartheta dr.$$
Moreover, applying Lemma \ref{prop-1} to $u(r,\vartheta)$ with fixed $r$, we obtain
$$\int_{\mathbb{S}^{N-1}} g (\vartheta)|u|^p d\vartheta\le C \| g \|_{L^q(\mathbb{S}^{N-1})} \left(\int_{\mathbb{S}^{N-1}}|u|^p d\vartheta+\int_{\mathbb{S}^{N-1}}|\nabla_{\vartheta} u|^pd\vartheta \right).$$
Consequently,
\begin{equation}\label{2n-ine}
\int_{\mathbb{R}^N} \frac{ g (x/|x|)}{|x|^{p+\alpha}}|u(x)|^pdx\le C \| g \|_{L^q(\mathbb{S}^{N-1})} \int_0^\infty   \int_{\mathbb{S}^{N-1}}\left(|u|^p+|\nabla_{\vartheta} u|^p\right)r^{N-p-\alpha-1} d\vartheta dr.   
\end{equation}
Next, we estimate the first integral on the right-hand side of \eqref{2n-ine}. Applying Lemma \ref{weighted-Hardy} with $\beta=N-\alpha-1,$ we obtain
\begin{equation*}
     \int_0^{\infty}|f(r)|^pr^{N-p-\alpha-1}d dr\le \left(\frac{p}{N-p-\alpha}\right)^p \int_0^{\infty}|f^\prime(r)|^pr^{N-\alpha-1} dr.
\end{equation*}
Now, applying the above inequality to $u(r,\vartheta)$ for a fixed $\vartheta$ and then integrating over $\mathbb{S}^{N-1},$ we obtain 
\begin{equation*}
     \int_0^{\infty}\int_{\mathbb{S}^{N-1}}|u|^pr^{N-p-\alpha-1}dr\le \left(\frac{p}{N-p-\alpha}\right)^p \int_0^{\infty}\int_{\mathbb{S}^{N-1}}|\partial_r u|^pr^{N-\alpha-1}dr.
\end{equation*}
Set $C_1=\left(\frac{p}{N-p-\alpha}\right)^p.$ Then, from \eqref{2n-ine} we get
 \begin{align}
\int_{\mathbb{R}^N} \frac{ g (x/|x|)}{|x|^{p+\alpha}}|u(x)|^pdx& \le C \| g \|_{L^q(\mathbb{S}^{N-1})}  \int_0^\infty \int_{\mathbb{S}^{N-1}}\left(C_1|\partial_ru|^p+\frac{|\nabla_{\vartheta} u|^p}{r^p}\right) r^{N-\alpha-1}  d\vartheta dr \nonumber\\& \le CC_2 \| g \|_{L^q(\mathbb{S}^{N-1})}  \int_0^\infty \int_{\mathbb{S}^{N-1}}\left(|\partial_ru|^p+\frac{|\nabla_{\vartheta} u|^p}{r^p}\right) r^{N-\alpha-1} d\vartheta dr .  \label{eqn-t1}
    \end{align}
where $C_2=\max\{1,C_1\}.$ Observe that
$$|\partial_ru|^p+\frac{|\nabla_{\vartheta} u|^p}{r^p}\le C_3\left(|\partial_ru|^2+\frac{1}{r^2}|\nabla_{\vartheta} u|^2\right)^\frac{p}{2}=C_3|\nabla u|^p,$$ where $C_3=\max\{1,2^{1-p/2}\}.$
Therefore,
\begin{align*}
     \int_0^{\infty}\int_{\mathbb{S}^{N-1}}\left(|\partial_ru|^p+\frac{|\nabla_{\vartheta} u|^p}{r^p}\right)r^{N-\alpha-1}d\vartheta dr &\le C_3 \int_0^{\infty}\int_{\mathbb{S}^{N-1}}|\nabla u|^pr^{N-\alpha-1}d\vartheta dr\\&=C_3\int_{\mathbb{R}^N}\frac{|\nabla u|^p}{|x|^\alpha} dx.
\end{align*}
Hence, the weighted Hardy inequality \eqref{eq-p02} follows from \eqref{eqn-t1} using the above estimate.
\qed

\begin{cor} Let $2 \le k \le N$ and $x=(y,z)\in \mathbb{R}^k\times\mathbb{R}^{N-k}$. Let $p\in (1,\infty)$ and $\alpha\in\mathbb{R}$ be such that $k>p+\alpha$. 
Assume that $g \in L^{q}(\mathbb{S}^{N-1})$, where
\[
q = 
\begin{cases}
\dfrac{k-1}{p}, & \text{if } p < k-1, \\
1, & \text{if } p > k-1 ,
\end{cases}
\]
and  $q>1$ when $ p = k-1.$
Then by Theorem~\ref{thm-p2},
\begin{align*}
   \int_{\mathbb{R}^N}\frac{ g (y/|y|)}{|y|^{p+\alpha}}|u(x)|^p dx&= \int_{\mathbb{R}^{N-k}}dz\int_{\mathbb{R}^{k}}\frac{ g (y/|y|)}{|y|^{p+\alpha}}|u(y,z)|^p dy \\&\leq C\| g \|_{L^q(\mathbb{S}^{k-1})}\int_{\mathbb{R}^N}\frac{|\nabla u(x)|^p}{|y|^\alpha} dx, \quad\forall\,u\in \mathcal{C}_c^\infty(\mathbb{R}^N). 
\end{align*}
Thus, we obtain a Hardy-type inequality with cylindrical weights. For $g\equiv 1$, the above inequality reduces to the cylindrical Hardy inequality proved in \cite{Tarentello2002} for $\alpha=0$, and extended in \cite{cylin} to any $\alpha<k-p.$
\end{cor}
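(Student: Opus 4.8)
The plan is to reduce the $N$-dimensional inequality to the $k$-dimensional one by Fubini's theorem, treating the variable $z\in\mathbb{R}^{N-k}$ as a parameter. First I would fix $u\in\mathcal{C}_c^\infty(\mathbb{R}^N)$ and, for each fixed $z$, define $u_z(y):=u(y,z)$, which is a function in $\mathcal{C}_c^\infty(\mathbb{R}^k)$ (smoothness and compact support in $y$ are inherited from $u$). Since the weight $g(y/|y|)/|y|^{p+\alpha}$ does not involve $z$, Tonelli's theorem lets me write
\[
\int_{\mathbb{R}^N}\frac{g(y/|y|)}{|y|^{p+\alpha}}|u(x)|^p\,dx=\int_{\mathbb{R}^{N-k}}\Bigl(\int_{\mathbb{R}^k}\frac{g(y/|y|)}{|y|^{p+\alpha}}|u_z(y)|^p\,dy\Bigr)dz.
\]

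Next I would apply Theorem~\ref{thm-p2} in dimension $k$ to the inner integral: the hypotheses $k>p+\alpha$ and $g\in L^q(\mathbb{S}^{k-1})$ with the stated exponent $q$ are exactly what that theorem requires (with $k$ in place of $N$), so there is a constant $C>0$, independent of $u_z$ (hence of $z$), with
\[
\int_{\mathbb{R}^k}\frac{g(y/|y|)}{|y|^{p+\alpha}}|u_z(y)|^p\,dy\le C\,\|g\|_{L^q(\mathbb{S}^{k-1})}\int_{\mathbb{R}^k}\frac{|\nabla_y u_z(y)|^p}{|y|^\alpha}\,dy.
\]
Integrating this over $z\in\mathbb{R}^{N-k}$ and using $|\nabla_y u(y,z)|\le|\nabla u(y,z)|$ pointwise, then applying Tonelli once more to recombine the iterated integral on the right into an integral over $\mathbb{R}^N$, yields
\[
\int_{\mathbb{R}^N}\frac{g(y/|y|)}{|y|^{p+\alpha}}|u(x)|^p\,dx\le C\,\|g\|_{L^q(\mathbb{S}^{k-1})}\int_{\mathbb{R}^N}\frac{|\nabla u(x)|^p}{|y|^\alpha}\,dx,
\]
which is the claimed cylindrical Hardy inequality. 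The specialization $g\equiv1$ recovers the cylindrical Hardy inequality of \cite{Tarentello2002} (for $\alpha=0$) and \cite{cylin} (for general $\alpha<k-p$).

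There is essentially no serious obstacle here; the one point that needs a word of care is the application of Fubini/Tonelli, which is justified because all integrands are nonnegative and measurable, and the finiteness of the right-hand side (for $u\in\mathcal{C}_c^\infty$) guarantees the manipulations are legitimate rather than trading one $+\infty$ for another. A second minor point is that the constant $C$ produced by Theorem~\ref{thm-p2} depends only on $k,p,\alpha$ and not on the slice, so it may be pulled outside the $z$-integration; this is immediate from the statement of that theorem. Thus the corollary is a direct slicing consequence of Theorem~\ref{thm-p2}.
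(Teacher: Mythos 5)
Your proposal is correct and follows essentially the same route as the paper: slice via Fubini--Tonelli, apply Theorem~\ref{thm-p2} in dimension $k$ to each fixed-$z$ section, use $|\nabla_y u|\le|\nabla u|$, and recombine. The only remark worth adding is that $g$ naturally lives on $\mathbb{S}^{k-1}$ (as you correctly write), whereas the corollary's hypothesis states $g\in L^q(\mathbb{S}^{N-1})$ --- an apparent typo in the paper, not a gap in your argument.
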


In order to prove Theorem \ref{sharp-wthm}, we use the Gagliardo-Nirenberg inequalities on the $(N-1)$-dimensional unit sphere, obtained by Dolbeault, Esteban, and Laptev \cite{Ari2014}.

\begin{lem}\cite[Lemma 5]{Ari2014}\label{G-N-ineq}
 Let $r=\frac{2(N-1)}{N-3}$ if $N>3,$ and $r=\infty$ if $N=2$ or $N=3.$ If $t\in (2,r)$, then there exists a concave increasing function $\mu:\mathbb{R}_+\rightarrow \mathbb{R}_+$ with
    $$\mu (\beta)=\beta,\quad \forall\, \beta\in \left[0,\frac{N-1}{t-2}\right],$$
    such that 
    \begin{equation}\label{gn}
       \|\nabla_\vartheta u\|^2_{L^2(\mathbb{S}^{N-1})} +\beta  \| u\|^2_{L^2(\mathbb{S}^{N-1})}\ge \mu(\beta)\,|\mathbb{S}^{N-1}|^\frac{t-2}{t}\| u\|^2_{L^t(\mathbb{S}^{N-1})},\quad \forall\, u\in H_1^2(\mathbb{S}^{N-1}). 
    \end{equation}
   Moreover, if $N>3$ and $t=\frac{2(N-1)}{N-3}$, then the above inequality also holds for any $\beta>0$ with $$\mu(\beta)=\min\left\{\beta,\frac{N-1}{t-2}\right\}.$$
\end{lem}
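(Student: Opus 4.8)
The plan is to realise $\mu(\beta)$ as the optimal constant in \eqref{gn} and read off its qualitative features from a variational description, thereby reducing the entire statement to a single rigidity assertion for the associated Euler--Lagrange equation.

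\textbf{Variational setup and the free properties.} For $u\in H_1^2(\mathbb{S}^{N-1})\setminus\{0\}$ I set
\[
\mathcal{Q}_\beta(u)=\frac{\|\nabla_\vartheta u\|_{L^2(\mathbb{S}^{N-1})}^2+\beta\,\|u\|_{L^2(\mathbb{S}^{N-1})}^2}{|\mathbb{S}^{N-1}|^{(t-2)/t}\,\|u\|_{L^t(\mathbb{S}^{N-1})}^2},
\]
and define $\mu(\beta)=\inf_u\mathcal{Q}_\beta(u)$, so that \eqref{gn} holds by construction with $\mu(\beta)$ the best constant. For each fixed $u$ the map $\beta\mapsto\mathcal{Q}_\beta(u)$ is affine with non-negative slope $\|u\|_{L^2}^2/(|\mathbb{S}^{N-1}|^{(t-2)/t}\|u\|_{L^t}^2)$; hence $\mu$, being an infimum of affine non-decreasing functions, is automatically concave and non-decreasing on $\mathbb{R}_+$, with $\mu\ge0$. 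Testing with the constant $u\equiv1$ gives $\mathcal{Q}_\beta(1)=\beta$, so $\mu(\beta)\le\beta$ for every $\beta$ and $\mu(0)=0$. Thus concavity, monotonicity and the upper bound $\mu(\beta)\le\beta$ are free, and the whole problem reduces to the reverse inequality $\mu(\beta)\ge\beta$ on $[0,\tfrac{N-1}{t-2}]$.

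\textbf{The threshold via linearisation.} To see why $\tfrac{N-1}{t-2}$ is the correct endpoint I linearise at the constant: writing $u=1+\eps\phi$ with $\int_{\mathbb{S}^{N-1}}\phi\,d\vartheta=0$ and expanding to second order gives
\[
\|\nabla_\vartheta u\|_{L^2}^2+\beta\|u\|_{L^2}^2-\beta\,|\mathbb{S}^{N-1}|^{(t-2)/t}\|u\|_{L^t}^2=\eps^2\Big(\|\nabla_\vartheta\phi\|_{L^2}^2-\beta(t-2)\|\phi\|_{L^2}^2\Big)+O(\eps^3).
\]
Since the first non-zero eigenvalue of $-\Delta_\vartheta$ on $\mathbb{S}^{N-1}$ equals $N-1$ (Poincar\'e inequality), the quadratic form on the right is non-negative for all mean-zero $\phi$ exactly when $\beta(t-2)\le N-1$, i.e. $\beta\le\tfrac{N-1}{t-2}$. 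This pins down the threshold and shows the constant is a local minimiser of $\mathcal{Q}_\beta$ in that range.

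\textbf{Rigidity --- the main obstacle.} The substantive step is to upgrade local to global minimality, i.e. to prove $\mu(\beta)=\beta$ for $\beta\le\tfrac{N-1}{t-2}$. When $t\in(2,r)$ the embedding $H_1^2(\mathbb{S}^{N-1})\hookrightarrow L^t(\mathbb{S}^{N-1})$ is compact, so a positive minimiser $u_\beta$ exists and solves $-\Delta_\vartheta u+\beta u=\Lambda\,u^{t-1}$ for some $\Lambda>0$. I would then run the \emph{rigidity} argument of Dolbeault, Esteban and Laptev: after the change of unknown $u=v^{\theta}$ with a carefully chosen exponent, the Bochner--Lichnerowicz--Weitzenb\"ock formula together with the lower Ricci bound $\mathrm{Ric}_{\mathbb{S}^{N-1}}=(N-2)\,g$ yields, upon integration by parts, an identity whose integrand is a sum of the squared norm of the trace-free Hessian of $v$ and a term carrying the factor $(N-1)-\beta(t-2)$. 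For $\beta\le\tfrac{N-1}{t-2}$ every contribution is non-negative, forcing the trace-free Hessian to vanish and hence $v$, and so $u$, to be constant. This carr\'e-du-champ computation is the crux of the proof; its delicate point is the precise choice of $\theta$ that makes the dimensional constants align so that the curvature term has the correct sign exactly up to the threshold.

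\textbf{The critical case.} For $N>3$ and $t=r=\tfrac{2(N-1)}{N-3}$ compactness is lost and minimisers need not exist. With $\beta_\ast=\tfrac{N-1}{t-2}=\tfrac{(N-1)(N-3)}{4}$, the equality $\mu(\beta)=\beta$ on $[0,\beta_\ast]$ survives either by passing to the limit $t\uparrow r$ in the subcritical inequality or because the rigidity identity persists at criticality. For the bound valid at all $\beta$, I would concentrate Aubin--Talenti bubbles at a point of $\mathbb{S}^{N-1}$: under concentration the gradient and $L^t$ norms reproduce the sharp Euclidean Sobolev quotient on $\mathbb{R}^{N-1}$, whose normalised value is exactly $\beta_\ast$ (the constant in Aubin's conformal Sobolev inequality on the sphere), while $\|u_\eps\|_{L^2}^2\to0$ in every dimension $N-1\ge3$, so the lower-order term drops out and $\mathcal{Q}_\beta(u_\eps)\to\beta_\ast$. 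Hence $\mu(\beta)\le\beta_\ast$ for all $\beta$. Combining this with concavity, monotonicity and $\mu(\beta)=\beta$ on $[0,\beta_\ast]$ forces $\mu(\beta)=\min\{\beta,\beta_\ast\}$, which is the asserted formula.
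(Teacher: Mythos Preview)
The paper does not prove this lemma at all: it is quoted verbatim as \cite[Lemma~5]{Ari2014} and used as a black box in the proof of Theorem~\ref{sharp-wthm}. So there is no ``paper's own proof'' to compare against; your proposal is an attempt to reconstruct the argument of the cited reference.

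That said, your outline is accurate and follows the strategy of Dolbeault--Esteban--Laptev (which in turn rests on the rigidity method going back to Bidaut-V\'eron--V\'eron and Bakry--\'Emery). Defining $\mu(\beta)$ as the sharp constant, concavity and monotonicity are immediate, the test $u\equiv1$ gives $\mu(\beta)\le\beta$, the second variation at constants identifies the threshold $\tfrac{N-1}{t-2}$ via the spectral gap of $-\Delta_\vartheta$, and the carr\'e-du-champ / Bochner identity upgrades local to global minimality in the subcritical range. For the critical exponent, the bound $\mu(\beta)\le\beta_\ast$ comes from Aubin's sharp Sobolev inequality on $\mathbb{S}^{N-1}$ together with the non-compact family of conformal extremals; your bubble computation is correct, and the claim $\|u_\eps\|_{L^2}^2\to0$ does hold on the compact sphere in every dimension $N-1\ge3$ (the potential obstruction in low dimension occurs only on $\mathbb{R}^{N-1}$, not on $\mathbb{S}^{N-1}$, because only the local mass near the concentration point matters). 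The one place where your sketch is genuinely incomplete is the rigidity step itself: you correctly flag it as the crux, but the choice of the exponent $\theta$ and the resulting algebra that makes the curvature and Hessian terms align is the entire content of the cited lemma, and you have not carried it out.
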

\smallskip

\noindent\textbf{Proof of Theorem \ref{sharp-wthm}:} Let $N\ge 2$ and $\alpha\in \mathbb{R}$ be such that $N> 2+\alpha$. Let $u\,\in \mathcal{C}_c^\infty(\mathbb{R}^N)$ and $x=(r,\vartheta)$ denote the polar coordinates in $\mathbb{R}^N.$  Let $t>2$ be as in Lemma \ref{G-N-ineq}, and define $q=\frac{t}{t-2}.$
By H\"older's inequality for the conjugate pair $(\frac{t}{t-2},\frac{t}{2})$, we obtain
$$\int_{\mathbb{S}^{N-1}}  g (\vartheta)|u|^2d\vartheta \leq \| g \|_{L^q(\mathbb{S}^{N-1})} \|u\|^2_{L^{t}(\mathbb{S}^{N-1})}.$$
 Now, use \eqref{gn} to obtain
\[
\int_{\mathbb{S}^{N-1}}  g (\vartheta)|u|^2d\vartheta \leq \frac{\|g \|_{L^q(\mathbb{S}^{N-1})}  }{\mu(\beta)|\mathbb{S}^{N-1}|^{1/q}}\int_{\mathbb{S}^{N-1}}\left( \beta |u|^2+ |\nabla_\vartheta u|^2\right)d\vartheta,\quad \beta>0.
\]
Multiply the above inequality by $r^{N-\alpha-3}$ and integrate over $(0,\infty)$ to obtain
\begin{align}
    \int_{\mathbb{R}^{N}} \frac{g(x/|x|)}{|x|^{2+\alpha}} |u|^pdx \leq  \frac{\|g \|_{L^q(\mathbb{S}^{N-1})}  }{\mu(\beta)|\mathbb{S}^{N-1}|^{1/q}}\int_0^{\infty}\int_{\mathbb{S}^{N-1}}\left( \beta |u|^2+ |\nabla_\vartheta u|^2\right)r^{N-\alpha-3} dr d\vartheta\label{gn2}
\end{align}
Moreover, apply Lemma \ref{weighted-Hardy} to obtain
\begin{equation}\label{one-dimes}
     \int_0^{\infty}|u|^2r^{N-\alpha-3} dr\le \frac{4}{(N-\alpha-2)^2}\int_0^{\infty}|\partial_ru|^2r^{N-\alpha-1}dr.
\end{equation}
Consequently,
\begin{equation}\label{gn-1}
      \int_{\mathbb{R}^{N}} \frac{g(x/|x|)}{|x|^{2+\alpha}} |u|^pdx \leq   \frac{\|g \|_{L^q(\mathbb{S}^{N-1})}  }{\mu(\beta)|\mathbb{S}^{N-1}|^{1/q}}\int_0^{\infty}\int_{\mathbb{S}^{N-1}}\left( \frac{4\beta}{(N-\alpha-2)^2} |\partial_r u|^2+ \frac{|\nabla_\vartheta u|^2}{r^2}\right)r^{N-\alpha-1} dr d\vartheta.
\end{equation}

 \noindent $(1)$ 
Let $2N\alpha<(N-\alpha-2)^2$  and $g\in L^q(\mathbb{S}^{N-1})$, where $q=\frac{(N-\alpha-2)^2}{2(N-1)}+1$. Then $$t=\frac{2q}{q-1}=2+\frac{4(N-1)}{(N-\alpha-2)^2}>2.$$ Since $2N\alpha<(N-\alpha-2)^2,$ we also have $$t <\frac{2(N-1)}{N-3}.$$  
Thus, by Lemma \ref{G-N-ineq}, inequality \eqref{gn-1} holds with
 $$\mu (\beta)=\beta,\quad \forall\, \beta\in \left[0,\;\frac{N-1}{t-2}=\frac{(N-\alpha-2)^2}{4}\right].$$
Now, by choosing $\beta=\frac{(N-\alpha-2)^2}{4}$ in \eqref{gn-1} and using the fact that  $\mu(\beta)=\beta$, we obtain
$$ \int_{\mathbb{R}^{N}} \frac{g(x/|x|)}{|x|^{2+\alpha}} |u|^pdx\le  \frac{4\|g \|_{L^q(\mathbb{S}^{N-1})}}{|\mathbb{S}^{N-1}|^{1/q} \,(N-\alpha-2)^2}\int_0^{\infty}\int_{\mathbb{S}^{N-1}}\left( |\partial_r u|^2+ \frac{|\nabla_\vartheta u|^2}{r^2}\right)r^{N-\alpha-1} dr d\vartheta.$$
This completes the proof of the inequality \eqref{shrp-whardy}.

\smallskip

\noindent $(2)$ Let $2N\alpha<(N-\alpha-2)^2,\,N>3$,  and $g\in L^q(\mathbb{S}^{N-1})$, where $q=\frac{N-1}{2}$. Then $$t=\frac{2q}{q-1}=\frac{2(N-1)}{N-3}>2.$$ 
Therefore, by Lemma \ref{G-N-ineq}, inequality \eqref{gn2} holds with 
\begin{equation*}
    \mu(\beta)=\min\left\{\beta,\frac{(N-1)(N-3)}{4}\right\}.
\end{equation*}
 Choose $\gamma_0>0$ such that $$\gamma_0=\frac{(N-\alpha-2)^2}{(N-1)(N-3)}.$$ Since $2N\alpha<(N-\alpha-2)^2,$ we have $$\gamma_0>1.$$
 Now, from \eqref{gn2}, we have 
\begin{align}
    \int_{\mathbb{R}^{N}} \frac{g(x/|x|)}{|x|^{2+\alpha}} |u|^2dx &\leq  \frac{\|g \|_{L^q(\mathbb{S}^{N-1})}  }{\mu(\beta)|\mathbb{S}^{N-1}|^{1/q}}\int_0^{\infty}\int_{\mathbb{S}^{N-1}}\left( \beta\gamma_0 |u|^2-\beta(\gamma_0-1) |u|^2+\nabla_\vartheta u|^2\right)r^{N-\alpha-3} dr d\vartheta\nonumber\\ &\leq   \frac{\|g \|_{L^q(\mathbb{S}^{N-1})}  }{\mu(\beta)|\mathbb{S}^{N-1}|^{1/q}}\int_0^{\infty}\int_{\mathbb{S}^{N-1}}\left( \frac{4\beta\gamma_0 }{(N-\alpha-2)^2} |\partial_r u|^2+ \frac{|\nabla_\vartheta u|^2}{r^2}\right)r^{N-\alpha-1} dr d\vartheta \nonumber\\&- \beta(\gamma_0-1) \frac{\|g \|_{L^q(\mathbb{S}^{N-1})}  }{\mu(\beta)|\mathbb{S}^{N-1}|^{1/q}}\int_0^{\infty}\int_{\mathbb{S}^{N-1}}  |u|^2r^{N-\alpha-3} dr d\vartheta\nonumber
\end{align}
where the last inequality follows from \eqref{one-dimes}. Thus, by choosing $$\beta=\frac{(N-\alpha-2)^2}{4\gamma_0}=\frac{(N-1)(N-3)}{4}$$ and using $\mu(\beta)=\beta,$ we obtain
\begin{multline*}
    \int_{\mathbb{R}^{N}} \frac{g(x/|x|)}{|x|^{2+\alpha}} |u|^2dx \leq   \frac{\|g \|_{L^q(\mathbb{S}^{N-1})}  }{\beta|\mathbb{S}^{N-1}|^{1/q}}\int_0^{\infty}\int_{\mathbb{S}^{N-1}}\left( |\partial_r u|^2+ \frac{|\nabla_\vartheta u|^2}{r^2}\right)r^{N-\alpha-1} dr d\vartheta \\ -(\gamma_0-1) \frac{\|g \|_{L^q(\mathbb{S}^{N-1})}  }{|\mathbb{S}^{N-1}|^{1/q}}\int_0^{\infty}\int_{\mathbb{S}^{N-1}}  |u|^2r^{N-\alpha-3} dr d\vartheta.
\end{multline*}
Hence, the inequality \eqref{sharp-whardy2} follows.

\smallskip

\noindent $(3)$ Let $2N\alpha\ge (N-\alpha-2)^2,\,N>3$, and $g\in L^q(\mathbb{S}^{N-1})$, where $q=\frac{N-1}{2}$. Then $$q=\frac{2(N-1)}{N-3}>2.$$ Thus, by Lemma \ref{G-N-ineq}, inequality \eqref{gn-1} holds with $$\mu(\beta)=\min\left\{\beta,\frac{(N-1)(N-3)}{4}\right\}.$$ 
Since $2N\alpha\ge (N-\alpha-2)^2,$ we also have $$\frac{(N-\alpha-2)^2}{4}\le \frac{(N-1)(N-3)}{4}.$$  
Now, by choosing $\beta=\frac{(N-\alpha-2)^2}{4}$ in \eqref{gn-1} and using $\mu(\beta)=\beta$ , we obtain
$$ \int_{\mathbb{R}^{N}} \frac{g(x/|x|)}{|x|^{2+\alpha}} |u|^pdx\le  \frac{4\|g \|_{L^q(\mathbb{S}^{N-1})}}{|\mathbb{S}^{N-1}|^{1/q} \,(N-\alpha-2)^2}\int_0^{\infty}\int_{\mathbb{S}^{N-1}}\left( |\partial_r u|^2+ \frac{|\nabla_\vartheta u|^2}{r^2}\right)r^{N-\alpha-1} dr d\vartheta.$$
This completes the proof.
\qed

\smallskip

Next, we consider the weighted Hardy inequality \eqref{hom-we} for a different class of function spaces for $g$, which may differ from those in Theorem \ref{thm-p2} and Theorem \ref{shrp-whardy}. Our proof follows the same lines as the proof of Theorem 1.7 in \cite{ari2015}.

\begin{thm}\label{theorem2.1}
    Let $N\ge 1,\,p\in (1,\infty),$ and $\alpha\in \mathbb{R}$ be such that $N> p+\alpha>0$. If $0\le g\in L^\frac{N}{p+\alpha}(\mathbb{S}^{N-1})$, 
then the following weighted Hardy inequality holds:
\begin{equation}\label{eq1-thm2.1}
    \int_{\mathbb{R}^N}\frac{ g (x/|x|)}{|x|^{p+\alpha}}|u(x)|^p dx \leq C \int_{\mathbb{R}^N}\frac{|\nabla u(x)|^p}{|x|^\alpha} dx, \quad\forall\,u\in \mathcal{C}_c^\infty(\mathbb{R}^N),
\end{equation}
where  $$C=\left(\frac{p}{N-p-\alpha}\right)^p\frac{\|g\|_{L^\frac{N}{p+\alpha}(\mathbb{S}^{N-1})}}{|\mathbb{S}^{N-1}|^\frac{p+\alpha}{N}}.$$
The constant in \eqref{eq1-thm2.1} is sharp, in the sense that it is attained for $g\equiv 1.$
\end{thm}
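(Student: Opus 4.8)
The plan is to deduce \eqref{eq1-thm2.1} from the known sharp \emph{radial} weighted Hardy inequality by symmetrisation, following the scheme used for \cite[Theorem 1.7]{ari2015}. Write $V(x)=g(x/|x|)/|x|^{p+\alpha}$; since $g\ge 0$, this is a nonnegative measurable function on $\mathbb{R}^N$. First I would apply the Hardy--Littlewood inequality (Proposition~\ref{prop1}(1)) to the pair $V$ and $|u|^p$, using that $(|u|^p)^{*}=(u^{*})^{p}$, to obtain
\[
\int_{\mathbb{R}^N}\frac{g(x/|x|)}{|x|^{p+\alpha}}|u(x)|^{p}\,dx\;\le\;\int_{\mathbb{R}^N}V^{*}(x)\,\bigl(u^{*}(x)\bigr)^{p}\,dx .
\]

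The decisive step is the explicit computation of $V^{*}$. Because $p+\alpha>0$, for every $t>0$ we have $\{V>t\}=\{(r,\vartheta)\colon 0<r<(g(\vartheta)/t)^{1/(p+\alpha)}\}$, whence, in polar coordinates,
\[
\bigl|\{V>t\}\bigr|=\frac1N\int_{\mathbb{S}^{N-1}}\Bigl(\frac{g(\vartheta)}{t}\Bigr)^{N/(p+\alpha)}d\vartheta=\frac{t^{-N/(p+\alpha)}}{N}\,\|g\|_{L^{N/(p+\alpha)}(\mathbb{S}^{N-1})}^{N/(p+\alpha)},
\]
which is finite precisely because $g\in L^{N/(p+\alpha)}(\mathbb{S}^{N-1})$. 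Equating this distribution function with $\bigl|\{V^{*}>t\}\bigr|=|\mathbb{S}^{N-1}|\,r^{N}/N$ and solving for $t$ as a function of $r=|x|$ gives the radial profile
\[
V^{*}(x)=\frac{\|g\|_{L^{N/(p+\alpha)}(\mathbb{S}^{N-1})}}{|\mathbb{S}^{N-1}|^{(p+\alpha)/N}}\cdot\frac{1}{|x|^{p+\alpha}}=:\frac{\kappa}{|x|^{p+\alpha}} .
\]

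It then remains to estimate $\int_{\mathbb{R}^N}(u^{*})^{p}|x|^{-(p+\alpha)}\,dx$. Here I would apply the sharp weighted Hardy inequality \eqref{hardy-phi} to $u^{*}$ (valid since $N>p+\alpha$; equivalently, pass to polar coordinates with $u^{*}(x)=\phi(|x|)$, $\phi$ nonincreasing, and invoke the one-dimensional Lemma~\ref{weighted-Hardy} with $\beta=N-\alpha-1>p-1$), obtaining
\[
\int_{\mathbb{R}^N}\frac{(u^{*}(x))^{p}}{|x|^{p+\alpha}}\,dx\le\Bigl(\frac{p}{N-p-\alpha}\Bigr)^{p}\int_{\mathbb{R}^N}\frac{|\nabla u^{*}(x)|^{p}}{|x|^{\alpha}}\,dx ,
\]
and then the weighted P\'olya--Szeg\H{o} inequality (Proposition~\ref{prop1}(2), for the $p$-th power of the gradient) to replace $u^{*}$ by $u$ on the right-hand side. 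Chaining the three estimates yields \eqref{eq1-thm2.1} with $C=\kappa\,(p/(N-p-\alpha))^{p}$, exactly the asserted constant. Sharpness is then immediate: when $g\equiv 1$ one has $\|g\|_{L^{N/(p+\alpha)}(\mathbb{S}^{N-1})}=|\mathbb{S}^{N-1}|^{(p+\alpha)/N}$, so $\kappa=1$ and $C=(p/(N-p-\alpha))^{p}$, the optimal constant in \eqref{hardy-phi} (see \cite{catrina2001,cylin}).

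The main obstacle I anticipate is the rigorous identification of $V^{*}$: one must verify that the radial power obtained by inverting the distribution function is genuinely the symmetric decreasing rearrangement of $V$, and that the directions $\vartheta$ with $g(\vartheta)=0$, on which $V$ vanishes, contribute nothing. The remaining technical matter — that the merely Lipschitz function $u^{*}$ is admissible in \eqref{hardy-phi}, in Lemma~\ref{weighted-Hardy}, and in the weighted P\'olya--Szeg\H{o} inequality — is routine, handled by density in the appropriate weighted Sobolev space.
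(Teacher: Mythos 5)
Your proposal is correct and follows essentially the same route as the paper's own proof: Hardy--Littlewood rearrangement, explicit computation of the symmetric decreasing rearrangement of $g(x/|x|)|x|^{-(p+\alpha)}$ via its distribution function, the sharp radial weighted Hardy inequality applied to $u^{*}$, and the weighted P\'olya--Szeg\H{o} inequality to return to $u$. The sharpness argument for $g\equiv 1$ is also the one used in the paper.
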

\begin{proof}
     Let $N>p+\alpha>0$ and $0\le g \in L^\frac{N}{p+\alpha}(\mathbb{S}^{N-1}).$ Let $x=(r,\vartheta)\in \mathbb{R}^N$ denote the polar coordinates in $\mathbb{R}^N$ and $u\in \mathcal{C}_c^\infty(\mathbb{R}^N)$. Applying the Hardy-Littlewood inequality (see $(1)$ of Proposition \ref{prop1}) and the fact that $(|u|^p)^*=|u^*|^p$ (see \cite{Edmunds2004}), we get
 \begin{equation}\label{eqn1-fract-p}
\int_{\mathbb{R}^N}\frac{ g (x/|x|)}{|x|^{p+\alpha}}|u(x)|^p dx\leq \int_{\mathbb{R}^N}\left(\frac{ g (x/|x|)}{|x|^{p+\alpha}}\right)^*|u^*(x)|^p dx.     
 \end{equation}
Next, we compute the symmetric decreasing rearrangement $\left(\frac{ g (x/|x|)}{|x|^{p+\alpha}}\right)^*.$
Observe that
\begin{align*}
   \left |\left\{y\in \mathbb{R}^N:\frac{ g (y/|y|)}{|y|^{p+\alpha}}>t\right\}\right|&= \left |\left\{y\in \mathbb{R}^N:|y|<\left(\frac{ g (y/|y|)}{t}\right)^\frac{1}{p+\alpha}\right\}\right|\\&=\int_{\mathbb{S}^{N-1}}\int_0^{( g(\vartheta) /t)^\frac{1}{p+\alpha}}r^{N-1}drd\vartheta\\&=\frac{t^\frac{-N}{p+\alpha}}{N}\int_{\mathbb{S}^{N-1}} g ^\frac{N}{p+\alpha}(\vartheta)d\vartheta,\quad \forall \,t\in (0,\infty).
\end{align*}
Now, using the definition of the symmetric decreasing rearrangement of a set (see \eqref{symm-set}), we obtain
$$\left\{y\in\mathbb{R}^N:\frac{| g (y/|y|)|}{|y|^{p+\alpha}}>t\right\}^*=\left\{y\in \mathbb{R}^N:|\mathbb{S}^{N-1}||y|^N<t^\frac{-N}{p+\alpha}\int_{\mathbb{S}^{N-1}} g ^\frac{N}{p+\alpha}(\vartheta)d\vartheta \right\}.$$
Thus, we conclude that
\begin{align*}
    \left(\frac{ g (x/|x|)}{|x|^{p+\alpha}}\right)^*&=\int_0^\infty\chi_{\left\{y\in\mathbb{R}^N:\frac{| g (y/|y|)|}{|y|^{p+\alpha}}>t\right\}^*}(x)\,dt\\&=\frac{1}{|\mathbb{S}^{N-1}|^{(p+\alpha)/N}|x|^{p+\alpha}}\left(\int_{\mathbb{S}^{N-1}} g ^\frac{N}{p+\alpha}(\vartheta)d\vartheta\right)^\frac{p+\alpha}{N}.
    \end{align*}
Consequently, from \eqref{eqn1-fract-p}, we obtain
$$\int_{\mathbb{R}^N}\frac{ g (x/|x|)}{|x|^{p+\alpha}}|u(x)|^p dx\leq \frac{\| g \|_{L^\frac{N}{p+\alpha}(\mathbb{S}^{N-1})}}{|\mathbb{S}^{N-1}|^{(p+\alpha)/N}}\int_{\mathbb{R}^N}\frac{|u^*(x)|^p}{|x|^{p+\alpha}}dx.$$
Since $N>p+\alpha$, it follows from \cite[Corollary 1.2.9]{Hardybook2015} that $$\int_{\mathbb{R}^N}\frac{|u^*(x)|^p}{|x|^{p+\alpha}}dx\leq \left(\frac{p}{N-p-\alpha}\right)^p\int_{\mathbb{R}^N}\frac{|\nabla u^*(x)|}{|x|^\alpha}dx.$$ Therefore,
$$\int_{\mathbb{R}^N}\frac{ g (x/|x|)}{|x|^{sp}}|u(x)|^p dx\leq \left(\frac{p}{N-p-\alpha}\right)^p\frac{\| g \|_{L^\frac{N}{p+\alpha}(\mathbb{S}^{N-1})}}{|\mathbb{S}^{N-1}|^{(p+\alpha)/N}}  \int_{\mathbb{R}^N}\frac{|\nabla u^*(x)|}{|x|^\alpha}dx.$$
Now, \eqref{eq1-thm2.1} follows from the weighted P\'olya and Szeg\"o inequality (see $(2)$ of Proposition \ref{prop1}). 
\end{proof}
\begin{rem} $(i)$ For $p=2$ with $\alpha=0$ Theorem \ref{theorem2.1} is established in \cite[Theorem 1.7]{ari2015}, and hence it generalises that result to all $p\in (1,\infty)$ and $\alpha\in \mathbb{R}.$

\smallskip
$(ii) $ Let $p\in (1,N-1)$ and suppose that $N>p+\alpha>0$. If $\alpha< \frac{p}{N-1},$ then Theorems \ref{thm-p2} and \ref{sharp-wthm} are stronger than Theorem \ref{theorem2.1}, since in this case we have the strict embedding
 $$L^\frac{N}{p+\alpha}(\mathbb{S}^{N-1})\subsetneq L^\frac{N-1}{p}(\mathbb{S}^{N-1}).$$ On the other hand, when $\alpha>\frac{p}{N-1},$ Theorem \ref{theorem2.1} becomes more general, as it admits a wider class of functions than $L^\frac{N-1}{p}(\mathbb{S}^{N-1}).$
\end{rem}

\smallskip

Next, we prove Theorem \ref{frac-hardy}, which establishes the weighted fractional Hardy inequality \eqref{eqn-frac-p}. The underlying idea is the same as in the proof of Theorem \ref{theorem2.1}.

\noindent \textbf{Proof of Theorem \ref{frac-hardy}:}
 Let $N>sp$ and $ 0\le g \in L^\frac{N}{sp}(\mathbb{S}^{N-1}).$ Let $x=(r,\vartheta)\in \mathbb{R}^N$ denote the polar coordinates in $\mathbb{R}^N$ and $u\in \mathcal{C}_c^\infty(\mathbb{R}^N)$. Applying the Hardy-Littlewood inequality (see $(1)$ of Proposition \ref{prop1}), we get
 \begin{equation}\label{eqn1-fract}
\int_{\mathbb{R}^N}\frac{ g (x/|x|)}{|x|^{sp}}|u(x)|^p dx\leq \int_{\mathbb{R}^N}\left(\frac{ g (x/|x|)}{|x|^{sp}}\right)^*|u^*(x)|^p dx.     
 \end{equation}
 We now proceed to compute the symmetric decreasing rearrangement $\left(\frac{ g (x/|x|)}{|x|^{sp}}\right)^*.$
Observe that
\begin{align*}
   \left |\left\{y\in \mathbb{R}^N:\frac{ g (y/|y|)}{|y|^{sp}}>t\right\}\right|&= \left |\left\{y\in \mathbb{R}^N:|y|<\left[\frac{ g (y/|y|)}{t}\right]^{1/sp}\right\}\right|\\&=\int_{\mathbb{S}^{N-1}}\int_0^{( g(\vartheta) /t)^{1/sp}}r^{N-1}drd\vartheta\\&=\frac{1}{N}t^{-N/sp}\int_{\mathbb{S}^{N-1}} g ^{N/sp}(\vartheta)d\vartheta,\quad \forall \,t\in (0,\infty).
\end{align*}
Now, by the definition of the symmetric decreasing rearrangement of a set, we obtain
$$\left\{y\in\mathbb{R}^N:\frac{| g (y/|y|)|}{|y|^{sp}}>t\right\}^*=\left\{y\in \mathbb{R}^N:|\mathbb{S}^{N-1}||y|^N<t^{-N/sp}\int_{\mathbb{S}^{N-1}} g ^{N/sp}(\vartheta)d\vartheta \right\}.$$
Therefore, 
\begin{align*}
    \left(\frac{ g (x/|x|)}{|x|^{sp}}\right)^*&=\int_0^\infty\chi_{\left\{y\in\mathbb{R}^N:\frac{| g (y/|y|)|}{|y|^{sp}}>t\right\}^*}(x)\,dt\\&=\int_0^\infty\chi_{\left\{y\in\mathbb{R}^N:|\mathbb{S}^{N-1}||y|^N<t^{-N/sp}\int_{\mathbb{S}^{N-1}} g ^{N/sp}(\vartheta)d\vartheta \right\}}(x)\,dt\\&=\frac{1}{|\mathbb{S}^{N-1}|^{sp/N}|x|^{sp}}\left(\int_{\mathbb{S}^{N-1}} g ^{N/sp}(\vartheta)d\vartheta\right)^{sp/N}.
    \end{align*}
Thus, from \eqref{eqn1-fract}, we obtain
$$\int_{\mathbb{R}^N}\frac{ g (x/|x|)}{|x|^{sp}}|u(x)|^p dx\leq \frac{\| g \|_{L^\frac{N}{sp}(\mathbb{S}^{N-1})}}{|\mathbb{S}^{N-1}|^{sp/N}}\int_{\mathbb{R}^N}\frac{|u^*(x)|^p}{|x|^{sp}}dx.$$
Since $N>sp$, it follows from \eqref{frac-norm} that $$\int_{\mathbb{R}^N}\frac{|u^*(x)|^p}{|x|^{sp}}dx\leq \Lambda_{N,s,p}\int_{\mathbb{R}^N}\int_{\mathbb{R}^N}\frac{|u^*(x)-u^*(y)|^p}{|x-y|^{N+sp}}dxdy.$$ Consequently,
$$\int_{\mathbb{R}^N}\frac{ g (x/|x|)}{|x|^{sp}}|u(x)|^p dx\leq \Lambda_{N,s,p}\frac{\| g \|_{L^\frac{N}{sp}(\mathbb{S}^{N-1})}}{|\mathbb{S}^{N-1}|^{sp/N}}  \int_{\mathbb{R}^N}\int_{\mathbb{R}^N}\frac{|u^*(x)-u^*(y)|^p}{|x-y|^{N+sp}}dxdy.$$
Hence, inequality \eqref{eqn-frac-p} follows from the fractional P\'olya and Szeg\"o inequality (see $(3)$ of Proposition \ref{prop1}). This completes the proof.
\qed

\begin{center}
	{\bf Acknowledgments}
\end{center}
The author acknowledges the support received from the Core Research Grant (CRG/2023/005344). The author would like to thank Prof. T.V. Anoop (IIT Madras) and Dr. Ujjal Das (BCAM, Spain) for discussions on the subject.

\bibliographystyle{abbrvurl}
\bibliography{Reference}


\end{document}